\newcommand{\R}{\mathbb{R}}
\numberwithin{equation}{section}
\newcommand{\beq}{\begin{equation} }
\newcommand{\eqq}{\end{equation} }
\newcommand{\cuad}{{\sqcap\kern-.68em\sqcup}}
\newcommand{\equ}[1]{(\ref{#1})}
\def\beeq{\begin{equation}}
\def\eeq{\end{equation}}
\newcommand{\begeqaet}{\begin{eqnarray*}}
\newcommand{\eneqaet}{\end{eqnarray*}}
\let\Section=\section
\def\section{\setcounter{equation}{0}\Section}
\newtheorem{Lem}{Lemma}[section]
\newtheorem{Thm}{Theorem}[section]
\newtheorem{Prop}{Proposition}[section]
\newtheorem{Cor}{Corollary}[section]
\begin{document}
\begin{center}{\bf\Large Existence and concentration of solution for a non-local regional Schr\"odinger equation with competing potentials  }\medskip

\bigskip

\bigskip

{Claudianor O. Alves}

Universidade Federal de Campina Grande\\
Unidade Acad\^emica de Matem\'atica\\
CEP: 58429-900 - Campina Grande - PB, Brazil\\
{\sl coalves@mat.ufcg.edu.br}

\noindent 

{C\'esar E. Torres Ledesma}

Departamento de Matem\'aticas, \\
Universidad Nacional de Trujillo,\\
Av. Juan Pablo II s/n. Trujillo-Per\'u\\
{\sl  ctl\_576@yahoo.es}


\medskip

\medskip
\medskip
\medskip
\medskip

\end{center}

\centerline{\bf Abstract}
In this paper, we study the existence and concentration phenomena of solutions for the following non-local regional Schr\"odinger equation
$$
\left\{
\begin{array}{l}
\epsilon^{2\alpha}(-\Delta)_\rho^{\alpha} u + Q(x)u = K(x)|u|^{p-1}u,\;\;\mbox{in}\;\; \mathbb{R}^n,\\
u\in H^{\alpha}(\mathbb{R}^n)
\end{array}
\right.
$$
where $\epsilon$ is a positive parameter, $0< \alpha < 1$, $1<p<\frac{n+2\alpha}{n-2\alpha}$, $n>2\alpha$; $(-\Delta)_{\rho}^{\alpha}$ is a variational version of the regional fractional Laplacian, whose range of scope is a ball with radius $\rho (x)>0$, $\rho, Q, K$ are competing functions. We study the existence of ground state and we analyze the behavior of semi-classical solutions as $\epsilon \to 0$.  
\medskip


\medskip

\date{}

\setcounter{equation}{0}
\section{Introduction}
The aim of this article is to study the non-linear Schr\"odinger equation with non-local regional diffusion and competing potentials 
$$
\left\{
\begin{aligned}
\epsilon^{2\alpha}(-\Delta)_\rho^{\alpha} u + Q(x)u& = K(x)|u|^{p-1}u,\;\;\mbox{in}\;\; \mathbb{R}^n,\\
u&\in H^{\alpha}(\mathbb{R}^n),
\end{aligned}
\right.
\leqno{(P)}
$$
where $0< \alpha < 1$, $\epsilon >0$, $n> 2\alpha$, $Q,K \in C(\mathbb{R}^n, \mathbb{R}^+)$ are bounded and  the operator $(-\Delta)_{\rho}^{\alpha}$ is a variational version of the non-local regional fractional Laplacian, with range of scope determined by a positive function $\rho \in C(\mathbb{R}^n, \mathbb{R}^+)$, which is defined as
$$
\int_{\mathbb{R}^n}(-\Delta)_{\rho}^{\alpha}u(x)\varphi (x)dx = \int_{\mathbb{R}^n}\int_{B(0, \rho (x))} \frac{[u(x+z) - u(z)][\varphi (x+z) - \varphi (x)]}{|z|^{n+2\alpha}}dzdx
$$ 

In what follows, we will work with the problem
$$
(-\Delta)_{\rho_{\epsilon}}^{\alpha}v + Q(\epsilon x)v = K(\epsilon x)|v|^{p-1}v,\quad x\in \mathbb{R}^n, \leqno({P'})
$$
with and $\rho_\epsilon = \frac{1}{\epsilon}\rho (\epsilon x)$, which is equivalent to $(P)$ by considering the change variable  $v(x) = u(\epsilon x)$ . 

Associated with $(P')$ we have the energy functional $I_{\rho_\epsilon} :X^{\epsilon} \to \mathbb{R}$ defined as
$$
\begin{array}{l}
\displaystyle I_{\rho_{\epsilon}} (v) = \frac{1}{2}\left( \int_{\mathbb{R}^n}\int_{B(0, \frac{1}{\epsilon}\rho (\epsilon x))}\hspace{-.5cm} \frac{|v(x+z) - v(x)|^2}{|z|^{n+2\alpha}} + \int_{\mathbb{R}^n}Q(\epsilon x)|v(x)|^2dx \right) -\\ \mbox{}\\
\;\;\;\;\;\;\;\;\;\;\;\;\;\;\;\displaystyle \frac{1}{p+1}\int_{\mathbb{R}^n} K(\epsilon x)|v(x)|^{p+1}dx,
\end{array}
$$
where $X^{\epsilon}$ denotes the Hilbert space $H^{\alpha}(\mathbb{R}^n)$ endowed with the norm
\begin{equation}\label{08}
\|v\|_{\rho_\epsilon}=\left(\int_{\mathbb{R}^{n}}\int_{B(0,\rho_\epsilon (x))}\frac{|v(x+z) - v(x)|^{2}}{|z|^{n+2\alpha}}dzdx + \int_{\mathbb{R}^{n}}Q(\epsilon x)|v(x)|^{2}dx \right)^{\frac{1}{2}}.
\end{equation}
Hereafter, we say that $v \in X^{\epsilon}$ is a weak solution of $(P')$ if $v$ is a critical point of $I_{\rho_\epsilon}$. In Section 2, Proposition \ref{FSprop1}, it is proved that $\|\,\,\,\|_{\rho_\epsilon}$ is equivalent to the usual norm in $H^{\alpha}(\R^n)$.

Recently, the study on problems of fractional Schr\"odinger equations has attracted much attention from many mathematicians. In the case of the fractional Laplacian $(-\Delta)^\alpha$, Chen \cite{MC} studied the existence of ground sate solution of nonlinear fractional Schr\"odinger equation 
\begin{equation}\label{02}
(-\Delta)^\alpha u + V(x)u = u^p\;\;\mbox{in}\;\;\mathbb{R}^n
\end{equation}  
with unbounded potential. The existence of a ground state of (\ref{02}) is obtained by a Lagrange multiplier method and the Nehari manifold method is used to obtain standing waves with prescribed frequency. If $V(x) = 1$, Dipierro et al. \cite{SDGPEV} proved existence and symmetry results for the solution of equation (\ref{02}). Felmer et al. \cite{PFAQJT}, studied the same equation with a more general nonlinearity $f(x,u)$, they obtained the existence, regularity and qualitative properties of ground states. Secchi \cite{SS} obtained positive solutions of a more general fractional Schr\"odinger equation by the variational method.    

On the other hand, research has been done in recent years regarding regional fractional Laplacian, where the scope of the operator is restricted to a variable region near each point. We mention the work by Guan \cite{guan1} and Guan and Ma \cite{guan2} where they study these operators, their relation with stochastic processes and they develop integration by parts formula, and the work by Ishii and Nakamura \cite{HIGN}, where the authors studied the Dirichlet problem for regional fractional Laplacian modeled on the p-Laplacian.

Recently, Felmer and Torres \cite{PFCT1, PFCT2} considered positive solution of nonlinear Schr\"odinger equation with non-local regional diffusion
\begin{equation}\label{03}
\epsilon^{2\alpha}(-\Delta)_{\rho}^{\alpha} u + u = f(u),\;\;u\in H^{\alpha}(\mathbb{R}^n),
\end{equation}
where the operator $(-\Delta)_{\rho}^{\alpha}$ is defined as above. Under suitable assumptions on the non-linearity $f$ and the range of scope $\rho$, they obtained the existence of a ground state by mountain pass argument and a comparison method. Furthermore, they analyzed symmetry properties and concentration phenomena of these solutions. These regional operators present various interesting characteristics that make them very attractive from the point of view of mathematical theory of non-local operators. We also mention the recent works by Torres \cite{CT1, CT2, CT3}, where existence, multiplicity and symmetry results are considered in bounded domain and $\mathbb{R}^n$.

We recall that when $(-\Delta)_{\rho}^{\alpha}$ is replaced by $(-\Delta)^{\alpha}$, Chen and Zheng \cite{GCYZ} studied (\ref{03}) with external potential $V(x)$ and $f(u) = |u|^{p-1}u$. They showed that when $n = 1,2,3$, $\epsilon$ is sufficiently small, $\max\{\frac{1}{2}, \frac{n}{4}\} < \alpha < 1$ and $V$ satisfies some smoothness and boundedness assumptions, equation (\ref{03}) has a nontrivial solution $u_\epsilon$ concentrated to some single point as $\epsilon \to 0$. Very recently, in \cite{JDMPJW},  D\'avila, del Pino and Wei generalized various existence results known for (\ref{03}) with $\alpha=1$ to the case of fractional Laplacian. Moreover, we also mention the works by Shang and Zhang \cite{XSJZ1, XSJZ2}, where it was considered the nonlinear fractional Schr\"odinger equation with competing potentials
\begin{equation}\label{04}
\epsilon^{2\alpha}(-\Delta)^{\alpha}u + V(x)u = K(x)|u|^{p-2}u + Q(x)|u|^{q-2}u,\;\;x\in \mathbb{R}^n,
\end{equation}
where $2<q<p<2_{\alpha}^{*}$. By using perturbative variational method, mountain pass arguments and Nehari manifold method, they analyzed the existence, multiplicity and concentration phenomena for the solutions of (\ref{04}).     

Motivated by these previous works, in this paper, our goal is to study the existence and concentration phenomena for the solutions of $(P)$. As pointed out in \cite{CASS, CASSJY, XWBZ}, the geometry of the ground state energy function $C(\xi)$, which is defined to be the ground state level associated with
$$
(-\Delta)^{\alpha}u + Q(\xi)u = K(\xi)|u|^{p-1}u,\;\;x\in \mathbb{R}^n,
$$
where $\xi \in \mathbb{R}^n$ is regard as a parameter instead of an independent variable, it is crucial in our approach. Here, the functions $\rho, Q$ and $K$ satisfy the following conditions: 
\begin{itemize}
	\item[$(H_0)$] There are positive real numbers $Q_\infty, K_\infty$ such that
	$$ 
	Q_\infty=\lim_{|\xi|\to +\infty}Q(\xi) \quad \mbox{and} \quad K_\infty=\lim_{|\xi|\to +\infty}K(\xi). 
	$$
	\item[$(H_{1})$] There are numbers $0<\rho_0<\rho_\infty\le \infty$ such that
	$$\rho_{0 } \leq \rho (\xi) < \rho_{\infty}, \quad \forall \xi \in \mathbb{R}^{n} \quad\mbox{and}\quad
	\lim_{|\xi| \to \infty} \rho (\xi) =\rho_\infty.
	$$
	\item[$(H_2)$] $Q,K: \mathbb{R}^n \to \mathbb{R}$ are continuous function satisfying
	$$
	0< a_1 \leq Q(\xi),K(\xi) \leq a_2 \quad \forall \xi \in \mathbb{R}^n 
	$$
	for some positive constants $a_1,a_2$.
\end{itemize}

\begin{Thm} \label{T1} Assume $(H_0)-(H_2)$. Then, if 
$$
\inf_{\xi \in \mathbb{R}^n}C(\xi) < \liminf_{|\xi| \to +\infty}C(\xi),  \leqno{(C)}
$$
problem $(P)$ has a ground state solution $u_\epsilon \in X^\epsilon$ for $\epsilon$ small enough. Moreover, for each sequence $\epsilon_m \to 0$, there is a subsequence such that for each $m \in \mathbb{N}$, the solution $u_{\epsilon_m}$ concentrates around a minimum point $\xi^*$ of the function $C(\xi)$, in the following sense: given $\delta>0$, there are $\epsilon_0, R>0$ such that
$$
\int_{B^{c}(\xi^*,\epsilon_m R)}|u_{\epsilon_m}|^{2}\,dx \leq \epsilon_m^{n}\delta \quad \mbox{and} \quad \int_{B(\xi^*,\epsilon_m R)}|u_{\epsilon_m}|^{2}\,dx \geq \epsilon_m^{n} C, \quad \forall \epsilon_m \leq \epsilon_0,
$$ 
where  $C$ is a constant independent of $\delta$ and $m$.  
\end{Thm}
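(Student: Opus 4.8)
The plan is to obtain the ground state as a mountain-pass critical point of $I_{\rho_\epsilon}$ on $X^\epsilon$, and to extract the concentration from a comparison with the frozen functionals whose ground-state levels define $C(\xi)$. Throughout I use the norm equivalence of Proposition \ref{FSprop1} to work in a fixed space while tracking the $\epsilon$-dependence through the bilinear form and the potentials.

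First I would verify that $I_{\rho_\epsilon}$ has the mountain-pass geometry: coercivity of the quadratic part follows from $(H_2)$ and the norm equivalence, while the superlinear subcritical growth $1<p<\frac{n+2\alpha}{n-2\alpha}$ gives the strict local maximum at the origin along the sphere and a direction on which the functional is negative. I denote by $c_\epsilon$ the resulting minimax level and identify it with the Nehari infimum $m_\epsilon=\inf\{I_{\rho_\epsilon}(v): v\neq 0,\ I_{\rho_\epsilon}'(v)v=0\}$. The key upper estimate is $\limsup_{\epsilon\to 0}c_\epsilon\le \inf_{\xi}C(\xi)$. To prove it, pick $\xi^*$ nearly realizing $\inf C$ and a ground state $w$ of the frozen problem $(-\Delta)^\alpha w+Q(\xi^*)w=K(\xi^*)|w|^{p-1}w$; using $v_\epsilon(x)=\eta(\epsilon x)\,w(x-\xi^*/\epsilon)$ with a cut-off $\eta$ as a test function, one checks that as $\epsilon\to 0$ the scope radius $\rho_\epsilon(x)=\frac{1}{\epsilon}\rho(\epsilon x)\ge \rho_0/\epsilon\to\infty$, so the regional Gagliardo form converges to the full fractional one, while $Q(\epsilon x),K(\epsilon x)\to Q(\xi^*),K(\xi^*)$ on the support, giving $I_{\rho_\epsilon}(t_\epsilon v_\epsilon)\to C(\xi^*)$ after the Nehari rescaling $t_\epsilon$.

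The heart of the argument is a local Palais--Smale condition below the threshold $\liminf_{|\xi|\to\infty}C(\xi)$. Given a $(PS)_{c_\epsilon}$ sequence, its boundedness is standard; the only loss of compactness is mass escaping to infinity. This is exactly where $(C)$ is decisive: by $(H_0)$ and $(H_1)$ any escaping profile solves a limit equation with the constants $Q_\infty,K_\infty$ and the full operator $(-\Delta)^\alpha$ (since $\rho(\epsilon x)\to\rho_\infty$ and $\rho_\epsilon\to\infty$), and hence carries energy at least $\liminf_{|\xi|\to\infty}C(\xi)$; combined with the upper bound of the previous step and the strict inequality in $(C)$, this excludes dichotomy and forces tightness, yielding strong convergence and a nontrivial critical point $v_\epsilon$, i.e. a ground state, for all small $\epsilon$.

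For the concentration, fix $\epsilon_m\to 0$ and the ground states $v_m:=v_{\epsilon_m}$. A uniform (in $m$) lower bound on the $L^{p+1}$ mass on balls, coming from the Nehari identity and the embedding $H^\alpha\hookrightarrow L^{p+1}$, yields points $y_m$ for which the translates $v_m(\cdot+y_m)$ stay nontrivial; passing to the limit with the compactness just established shows they converge strongly to a ground state of a frozen problem at $\xi^*=\lim \epsilon_m y_m$, and the energy comparison forces $C(\xi^*)=\inf_\xi C$, so $\xi^*$ minimizes $C$. Undoing the scaling $u_{\epsilon_m}(x)=v_m(x/\epsilon_m)$ then turns the tightness of $v_m$ near $y_m$ into the stated inequalities: the Jacobian of $x\mapsto x/\epsilon_m$ produces the factor $\epsilon_m^n$, the ball $B(\xi^*,\epsilon_m R)$ corresponds to $B(y_m,R)$ after rescaling, the outer smallness is the no-escape property, and the inner lower bound $C$ independent of $\delta$ and $m$ comes from the uniform nontriviality of the limit profile. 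The main obstacle I expect is making the passage from the regional operator to $(-\Delta)^\alpha$ rigorous and uniform: because the scope radius depends on both $x$ and $\epsilon$, one must control the tail $\int_{|z|>\rho_\epsilon(x)}\frac{|v(x+z)-v(x)|^2}{|z|^{n+2\alpha}}\,dz$ uniformly along the relevant sequences in order both to justify the energy comparison and to identify the escaping limit equation, and only then can the strict inequality $(C)$ be deployed to rule out the dichotomy in the concentration-compactness alternative.
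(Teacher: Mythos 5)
Your proposal follows essentially the same strategy as the paper: the mountain-pass/Nehari level $c_\epsilon=C_{\rho_\epsilon}$, the upper bound $\limsup_{\epsilon\to 0}C_{\rho_\epsilon}\le \inf_{\xi}C(\xi)$ obtained from translated frozen ground states (the paper's Lemma \ref{LIMITC}, which does not even need your cut-off since the regional form is dominated by the full Gagliardo form), the use of $(C)$ to rule out escape to infinity of the $(PS)$ sequences (Theorem \ref{main1}), and then translation points $y_\epsilon$, identification of the limit profile as a frozen ground state at $\xi^*=\lim\epsilon_m y_{\epsilon_m}$ minimizing $C$, strong convergence, and rescaling to get the stated $L^2$ estimates (Lemmas \ref{Clm3}--\ref{Clm5}). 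The only differences are cosmetic (you track $L^{p+1}$ mass where the paper uses the $L^2$ vanishing lemma, and you phrase the exclusion of escape as a local $(PS)$ condition rather than as a contradiction with the level of the limit functional at infinity), so your plan is correct and matches the paper's proof.
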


We would like to point out that the condition $(C)$ is not empty, because it holds by supposing that there is $\xi_0 \in \R^n$ such that   
$$
\frac{Q(\xi_0)^{\frac{p+1}{p-1}-\frac{n}{2\alpha}}}{K(\xi_0)^{\frac{2}{p-1}}} < \frac{Q_\infty^{\frac{p+1}{p-1}-\frac{n}{2\alpha}}}{K_\infty^{\frac{2}{p-1}}}.
$$
For more details, see Corollary \ref{ntakey} in Section 3.


\section{Preliminaries}

In this section we recall some basic facts about the Sobolev space $H^\alpha(\mathbb{R}^n)$ such as embeddings and compactness properties. To begin with, we recall the following embedding theorem.
\begin{Thm} {\rm (\cite{EDNGPEV})}\label{FStm1}
Let $\alpha \in (0,1)$, then there exists a positive constant $C = C(n,\alpha)$ such that
\begin{equation}\label{P01}
\|u\|_{L^{2_{\alpha}^{*}}(\mathbb{R}^{n})}^{2} \leq C \int_{\mathbb{R}^{n}} \int_{\mathbb{R}^{n}} \frac{|u(x) - u(y)|^{2}}{|x-y|^{n+2\alpha}}dydx
\end{equation}
and then  $H^{\alpha}(\mathbb{R}^{n}) \hookrightarrow L^{q}(\mathbb{R}^{n})$ is continuous for all $q \in [2, 2_{\alpha}^{*}]$.
Moreover, $H^{\alpha}(\mathbb{R}^{n}) \hookrightarrow L^{q}(\Omega)$ is compact for any bounded set $\Omega\subset \mathbb{R}^{n}$ and for all $q \in [2, 2_{\alpha}^{*})$, where $2_{\alpha}^{*} = \frac{2n}{n-2\alpha}$ is the  critical exponent.
\end{Thm}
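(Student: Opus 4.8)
The plan is to establish the three assertions in turn: the fractional Sobolev inequality \eqref{P01}, the continuous embedding $H^{\alpha}(\mathbb{R}^n)\hookrightarrow L^q(\mathbb{R}^n)$ on the range $q\in[2,2_\alpha^*]$, and finally the compact embedding into $L^q(\Omega)$ for bounded $\Omega$ and $q\in[2,2_\alpha^*)$. Throughout I write $[u]^2:=\int_{\mathbb{R}^n}\int_{\mathbb{R}^n}\frac{|u(x)-u(y)|^2}{|x-y|^{n+2\alpha}}\,dy\,dx$ for the Gagliardo seminorm, so that $\|u\|_{H^\alpha}^2=\|u\|_{L^2}^2+[u]^2$. \emph{For the inequality \eqref{P01}} I would pass to the Fourier side: a change of variables $z=y-x$ and Plancherel's theorem give the identity $[u]^2=c(n,\alpha)\int_{\mathbb{R}^n}|\xi|^{2\alpha}|\widehat u(\xi)|^2\,d\xi$ with $c(n,\alpha)=\int_{\mathbb{R}^n}\frac{1-\cos(z_1)}{|z|^{n+2\alpha}}\,dz\in(0,\infty)$, which identifies $[u]$, up to a constant, with $\|(-\Delta)^{\alpha/2}u\|_{L^2}$. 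Setting $f:=(-\Delta)^{\alpha/2}u$ one recovers $u$ as the Riesz potential $u=\gamma_{n,\alpha}\int_{\mathbb{R}^n}\frac{f(y)}{|x-y|^{n-\alpha}}\,dy$, and the Hardy--Littlewood--Sobolev inequality then yields $\|u\|_{L^{2_\alpha^*}}\le C\|f\|_{L^2}=C'\,[u]$, because $\frac{1}{2_\alpha^*}=\frac12-\frac{\alpha}{n}$. This is exactly \eqref{P01}; a self-contained proof avoiding the Fourier transform can instead be read off from \cite{EDNGPEV}.

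\emph{Continuous embedding.} The case $q=2$ is immediate from $\|u\|_{L^2}\le\|u\|_{H^\alpha}$, and the case $q=2_\alpha^*$ follows by combining \eqref{P01} with this $L^2$ bound. For $2<q<2_\alpha^*$ I would interpolate: choosing $\theta\in(0,1)$ with $\frac1q=\frac{1-\theta}{2}+\frac{\theta}{2_\alpha^*}$, H\"older's inequality gives $\|u\|_{L^q}\le\|u\|_{L^2}^{1-\theta}\|u\|_{L^{2_\alpha^*}}^{\theta}\le C\|u\|_{H^\alpha}$, which proves continuity on the whole range.

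\emph{Compact embedding.} The engine is the translation estimate $\int_{\mathbb{R}^n}|u(x+h)-u(x)|^2\,dx\le C|h|^{2\alpha}[u]^2$, which follows from the Fourier identity above together with the elementary bound $|e^{i h\cdot\xi}-1|^2=4\sin^2(\tfrac{h\cdot\xi}{2})\le C|h|^{2\alpha}|\xi|^{2\alpha}$. Consequently, over any ball containing a bounded subset of $H^{\alpha}(\mathbb{R}^n)$, the $L^2$-translations tend to zero uniformly as $h\to0$. Multiplying by a fixed cutoff $\eta\in C_c^\infty(\mathbb{R}^n)$ equal to $1$ on $\Omega$ confines all the functions to a fixed ball and only perturbs the translation estimate by a term of order $|h|$, so the Riesz--Fr\'echet--Kolmogorov criterion applies and shows that bounded sets of $H^{\alpha}(\mathbb{R}^n)$ are relatively compact in $L^2(\Omega)$. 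To pass to $L^q(\Omega)$ with $q<2_\alpha^*$, take a bounded sequence $(u_k)\subset H^{\alpha}(\mathbb{R}^n)$; a subsequence converges in $L^2(\Omega)$ and stays bounded in $L^{2_\alpha^*}(\Omega)$ by the previous step, and applying the interpolation inequality to $u_k-u_j$ shows the subsequence is Cauchy, hence convergent, in $L^q(\Omega)$.

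The main obstacle is the sharp inequality \eqref{P01}: it rests on the nontrivial Hardy--Littlewood--Sobolev inequality together with the spectral computation identifying the Gagliardo seminorm with $\|(-\Delta)^{\alpha/2}u\|_{L^2}$ (equivalently, checking $0<c(n,\alpha)<\infty$ and that $|\xi|^{2\alpha}$ is the Fourier symbol of the singular kernel). Once \eqref{P01} and the translation estimate are available, the continuous and compact embeddings reduce to standard interpolation and a classical compactness criterion.
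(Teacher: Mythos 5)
Your proposal is correct, but it is worth noting that the paper itself offers no proof of this statement: Theorem \ref{FStm1} is quoted verbatim from the reference \cite{EDNGPEV}, so the only fair comparison is with the proof given there. Your route is the classical Fourier-analytic one: Plancherel identifies the Gagliardo seminorm with $c(n,\alpha)^{1/2}\|(-\Delta)^{\alpha/2}u\|_{L^{2}}$, the Riesz potential representation plus Hardy--Littlewood--Sobolev yields \eqref{P01}, and the translation estimate $\|u(\cdot+h)-u\|_{L^{2}}^{2}\le C|h|^{2\alpha}[u]^{2}$ feeds into Riesz--Fr\'echet--Kolmogorov after the cutoff trick (your bookkeeping here is right: the cutoff costs only an extra $O(|h|)$ term, and it localizes the family in a fixed ball so the criterion applies). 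By contrast, the proof in \cite{EDNGPEV} is deliberately elementary and Fourier-free: the Sobolev inequality is obtained for general $W^{s,p}$ by a covering/layer-cake argument, and the compactness theorem by mollification-type estimates; that approach buys generality ($p\neq 2$, where Plancherel is unavailable) at the cost of length, while yours buys brevity and the sharp spectral identification of the seminorm, but is genuinely tied to the Hilbert case $p=2$ --- which is all this paper uses. Two small points you should make explicit if you write this up fully: the Riesz potential representation $u=\gamma_{n,\alpha}\,I_{\alpha}\bigl((-\Delta)^{\alpha/2}u\bigr)$ is first justified for Schwartz functions and then extended by density in $H^{\alpha}(\mathbb{R}^{n})$ (this is where the standing hypothesis $n>2\alpha$ enters, ensuring $2_{\alpha}^{*}<\infty$ and $0<\alpha<n$ for the potential); and in the final interpolation step one should state the inequality $\|u_{k}-u_{j}\|_{L^{q}(\Omega)}\le\|u_{k}-u_{j}\|_{L^{2}(\Omega)}^{1-\theta}\|u_{k}-u_{j}\|_{L^{2_{\alpha}^{*}}(\Omega)}^{\theta}$ with $\theta<1$, which is exactly why the endpoint $q=2_{\alpha}^{*}$ must be excluded from the compactness assertion.
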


The next lemma establishes that $\|\,\,\,\|_{\rho_\epsilon}$ is equivalent to usual norm in $H^{\alpha}(\R^n)$. 
\begin{Prop}\label{FSprop1}
Suppose that ($H_{1}$) and $(H_2)$ hold  and set
$$
\|u\|=\left(\int_{\mathbb{R}^{n}}\int_{\R^n}\frac{|u(x) - u(z)|^{2}}{|x-z|^{n+2\alpha}}dzdx + \int_{\mathbb{R}^{n}}|u(x)|^{2}dx \right)^{\frac{1}{2}}
$$
the usual norm in $H^{\alpha}(\R^n)$. Then, there exists a constant $\mathfrak{S}> 0$ independent of $\epsilon$ such that
$$
\|u\| \leq \mathfrak{S} \|u\|_{\rho_\epsilon}, \quad \forall u \in H^{\alpha}(\R^n).
$$
From this, $\|\cdot\|$ and   $\|\cdot\|_{\rho_\epsilon}$ are equivalents norms in  $H^{\alpha}(\mathbb{R}^{n})$. 
\end{Prop}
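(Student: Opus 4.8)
The plan is to establish the nontrivial inequality $\|u\| \le \mathfrak{S}\|u\|_{\rho_\epsilon}$; the reverse bound $\|u\|_{\rho_\epsilon}\le C\|u\|$ is immediate and holds for every $\epsilon$, since enlarging the inner domain $B(0,\rho_\epsilon(x))$ to all of $\R^n$ only increases the (nonnegative) seminorm integrand, while $(H_2)$ gives $\int_{\R^n}Q(\epsilon x)|u|^2\,dx \le a_2\int_{\R^n}|u|^2\,dx$, whence $\|u\|_{\rho_\epsilon}^2 \le (1+a_2)\|u\|^2$. Once both inequalities are in hand the two norms are equivalent. For the main bound, after the change of variable $z=y-x$ the Gagliardo seminorm in $\|u\|^2$ becomes $\int_{\R^n}\int_{\R^n}\frac{|u(x+z)-u(x)|^2}{|z|^{n+2\alpha}}\,dz\,dx$, and I would split the inner integration over $\R^n$ into the near region $\{|z|<\rho_\epsilon(x)\}$ and the far region $\{|z|\ge\rho_\epsilon(x)\}$. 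The near region contributes exactly the seminorm part of $\|u\|_{\rho_\epsilon}^2$, so it is bounded by $\|u\|_{\rho_\epsilon}^2$ and needs nothing further.

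The heart of the argument is the far region, that is, the long-range interactions that the restricted norm does not see. Here I would use $|u(x+z)-u(x)|^2\le 2|u(x+z)|^2+2|u(x)|^2$ together with the lower bound on the scope radius coming from $(H_1)$: since $\rho_\epsilon(x)=\frac1\epsilon\rho(\epsilon x)\ge \rho_0/\epsilon\ge\rho_0$ for $\epsilon\le 1$, one has $\{|z|\ge\rho_\epsilon(x)\}\subset\{|z|\ge\rho_0\}$, so the $x$-dependent exclusion ball can be replaced by the fixed ball $B(0,\rho_0)$. The key integrability fact is that $n+2\alpha>n$, which yields the finite radial tail $\int_{|z|\ge\rho_0}|z|^{-(n+2\alpha)}\,dz=c_{n,\alpha}\,\rho_0^{-2\alpha}=:C_0$. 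For the term carrying $|u(x)|^2$ I integrate in $z$ first, producing $C_0\|u\|_{L^2(\R^n)}^2$; for the term carrying $|u(x+z)|^2$ I apply Tonelli and the translation invariance of Lebesgue measure (so that $\int_{\R^n}|u(x+z)|^2\,dx=\|u\|_{L^2(\R^n)}^2$) to obtain the same bound. Thus the far region is controlled by $4C_0\|u\|_{L^2(\R^n)}^2$. (Keeping the sharper radius $\rho_0/\epsilon$ even shows this tail decays like $\epsilon^{2\alpha}$, but the crude bound already suffices for a uniform constant.)

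Finally I would absorb the $L^2$ mass into $\|u\|_{\rho_\epsilon}$: since $(H_2)$ gives $Q(\epsilon x)\ge a_1$, one has $\|u\|_{L^2(\R^n)}^2\le a_1^{-1}\|u\|_{\rho_\epsilon}^2$. Collecting the three pieces,
$$\|u\|^2 \le \|u\|_{\rho_\epsilon}^2 + 4C_0\|u\|_{L^2(\R^n)}^2 + \|u\|_{L^2(\R^n)}^2 \le \Big(1+\tfrac{4C_0+1}{a_1}\Big)\|u\|_{\rho_\epsilon}^2,$$
so that $\mathfrak{S}=\big(1+(4C_0+1)/a_1\big)^{1/2}$ depends only on $n,\alpha,\rho_0,a_1$ and not on $\epsilon$. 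The only delicate point is the far-region estimate: one must exploit both the uniform positive lower bound $\rho_0$ on the range of scope (so the excluded ball cannot shrink as $x$ varies) and the integrability of $|z|^{-(n+2\alpha)}$ at infinity; everything else is a routine application of Fubini–Tonelli and the coercivity of $Q$ furnished by $(H_2)$.
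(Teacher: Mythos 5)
Your proof is correct and follows essentially the same route as the paper: split the full Gagliardo seminorm into a near part (absorbed into the $\rho_\epsilon$-seminorm since $\rho_\epsilon \geq \rho_0$) and a far part, bound the far part by the $L^2$ norm using $|u(x+z)-u(x)|^2 \leq 2|u(x+z)|^2+2|u(x)|^2$ and the integrable tail $\int_{|z|\geq \rho_0}|z|^{-(n+2\alpha)}dz$, and then absorb the $L^2$ mass via $Q \geq a_1$ from $(H_2)$. Your explicit restriction to $\epsilon \leq 1$ makes precise the uniformity in $\epsilon$ that the paper handles with its ``without loss of generality $\epsilon=1$'' reduction.
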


\begin{proof}
Without loss of generality we will consider $\epsilon =1$. For $u \in  X^1=H^{\alpha}(\mathbb{R}^{n})$, the Fubini's Theorem together with $(H_1)$ and $(H_2)$ gives
\begin{equation}\label{P02}
\begin{aligned}
&a_1 \|u\|^2 = a_1 \int_{\mathbb{R}^{n}} |u(x)|^{2}dx + a_1 \int_{\mathbb{R}^{n}}\int_{B(x,\rho_{0})} \frac{|u(x) - u(z)|^{2}}{|x-z|^{n+2\alpha}}dzdx + \\
&\;\;\;\;\;\;\;\;\;\;\;\;\;\;\;\;\;a_1\int_{\mathbb{R}^{n}}\int_{B^c(x,\rho_{0})} \frac{|u(x) - u(z)|^{2}}{|x-z|^{n+2\alpha}}dzdx\\
&\leq \left(1+ \frac{2|S^{n-1}|}{\alpha \rho_{0}^{2\alpha}}\right) \int_{\mathbb{R}^n} Q(x)|u(x)|^{2}dx + a_1\int_{\mathbb{R}^{n}}\int_{B(x,\rho_{0})} \frac{|u(x) - u(z)|^{2}}{|x-z|^{n+2\alpha}}dzdx\\
&\leq A\left( \int_{\mathbb{R}^n} Q(x)|u(x)|^2dx + \int_{\mathbb{R}^n} \int_{B(0, \rho(x))}\frac{|u(x+z) - u(x)|^2}{|z|^{n+2\alpha}}dzdx\right),
\end{aligned}
\end{equation}
where $A=\max\left\{a_1, \left( 1 + \frac{2|S^{n-1}|}{\alpha \rho_{0}^{2\alpha}} \right)\right\}$. The proposition follows by taking $\mathfrak{S} = \frac{1}{a_1}A$. 
\end{proof}

The following lemma is a version of the concentration compactness principle proved by Felmer and Torres \cite{PFCT1}, which will be use later on.
\begin{Lem}\label{FSlem1}
Let $n\geq 2$. Assume that $\{u_{k}\}$ is bounded in $H_{\rho}^{\alpha}(\mathbb{R}^{n})$ and it satisfies
\begin{equation}\label{P03}
\lim_{k\to \infty} \sup_{y\in \mathbb{R}^{n}}\int_{B(y,R)}|u_{k}(x)|^{2}dx = 0,
\end{equation}
where $R>0$. Then $u_{k} \to 0$ in $L^{q}(\mathbb{R}^{n})$ for $2 < q < 2_{\alpha}^{*}$.
\end{Lem}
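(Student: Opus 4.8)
The plan is to run the classical Lions vanishing argument, adapted to the Gagliardo seminorm. First I would fix, once and for all, a countable covering of $\R^n$ by balls $\{B_i\}=\{B(y_i,R)\}$ of the given radius $R$ enjoying the bounded overlap property: there is $N_0=N_0(n)$ such that every point of $\R^n$ lies in at most $N_0$ of the $B_i$. By Proposition \ref{FSprop1}, the hypothesis that $\{u_k\}$ is bounded in $H_\rho^\alpha(\R^n)$ is equivalent to boundedness in $H^\alpha(\R^n)$, so $\{u_k\}$ is bounded in $L^2(\R^n)$ and, by Theorem \ref{FStm1}, in $L^{2_\alpha^*}(\R^n)$. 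Write $M:=\sup_k\|u_k\|<\infty$ and $\delta_k:=\sup_{y}\int_{B(y,R)}|u_k|^2\,dx$, so that $\delta_k\to0$ by \eqref{P03}.

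The heart of the matter is a local estimate on each ball. Fix $q\in(2,2_\alpha^*)$ and let $\theta\in(0,1)$ be determined by $\frac1q=\frac{1-\theta}{2}+\frac{\theta}{2_\alpha^*}$. Hölder's inequality gives $\|u\|_{L^q(B_i)}\le\|u\|_{L^2(B_i)}^{1-\theta}\|u\|_{L^{2_\alpha^*}(B_i)}^{\theta}$, while the fractional Sobolev embedding on a ball of radius $R$ (its constant being independent of $i$ by translation invariance) yields $\|u\|_{L^{2_\alpha^*}(B_i)}^2\le C\,\|u\|_{H^\alpha(B_i)}^2$, where $\|u\|_{H^\alpha(B_i)}^2=\int_{B_i}\int_{B_i}\frac{|u(x)-u(z)|^2}{|x-z|^{n+2\alpha}}\,dz\,dx+\int_{B_i}|u|^2\,dx$. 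Combining the two and raising to the power $q$,
\[
\int_{B_i}|u|^q\,dx\le C\Big(\int_{B_i}|u|^2\,dx\Big)^{(1-\theta)q/2}\,\|u\|_{H^\alpha(B_i)}^{\theta q}.
\]

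I would first close the argument in the range $\theta q\ge2$, i.e. $q\ge 2+\tfrac{4\alpha}{n}$. There, bounding the first factor by $\delta_k^{(1-\theta)q/2}$, summing over $i$, and using the superadditivity inequality $\sum_i a_i^{m}\le(\sum_i a_i)^{m}$ with $m=\theta q/2\ge1$ together with the bounded overlap, I obtain
\[
\int_{\R^n}|u_k|^q\,dx\le\sum_i\int_{B_i}|u_k|^q\,dx\le C\,\delta_k^{(1-\theta)q/2}\Big(\sum_i\|u_k\|_{H^\alpha(B_i)}^2\Big)^{\theta q/2}\le C\,(N_0M^2)^{\theta q/2}\,\delta_k^{(1-\theta)q/2},
\]
which tends to $0$. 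The only bookkeeping point is that $\sum_i\|u_k\|_{H^\alpha(B_i)}^2\le C\,N_0\,\|u_k\|^2$: each pair $(x,z)$ lying in a common ball is counted at most $N_0$ times, so the sum of the local Gagliardo seminorms is controlled by the global one.

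For the remaining range $2<q<2+\tfrac{4\alpha}{n}$ the summation above fails, since the exponent $\theta q/2$ on the seminorm is then below $1$ and $\sum_i\|u_k\|_{H^\alpha(B_i)}^{\theta q}$ need not converge. Here I would argue by interpolation instead: fixing any $q_0\in[2+\tfrac{4\alpha}{n},2_\alpha^*)$, the previous step already gives $u_k\to0$ in $L^{q_0}(\R^n)$, and writing $\frac1q=\frac{1-s}{2}+\frac{s}{q_0}$ with $s\in(0,1)$ we have $\|u_k\|_{L^q}\le\|u_k\|_{L^2}^{1-s}\|u_k\|_{L^{q_0}}^{s}$. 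Since $\|u_k\|_{L^2}\le M$ is bounded and $\|u_k\|_{L^{q_0}}\to0$ with $s>0$, it follows that $u_k\to0$ in $L^q(\R^n)$, which completes the proof for every $q\in(2,2_\alpha^*)$. The main obstacle is precisely this splitting: the naive covering estimate closes only when the Sobolev exponent forces the seminorm to enter with a power at least $2$, and one must supply the separate interpolation step (exploiting mere boundedness in $L^2$, not smallness) to reach the exponents near $2$.
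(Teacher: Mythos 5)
Your proof is correct, but note that the paper contains no proof of Lemma \ref{FSlem1} to compare against: the lemma is imported verbatim from Felmer and Torres \cite{PFCT1}, where it is proved by essentially the argument you give. Your proposal (boundedness in $H^{\alpha}(\R^n)$ via Proposition \ref{FSprop1}, local interpolation between $L^{2}$ and $L^{2_{\alpha}^{*}}$ combined with the fractional Sobolev embedding on balls of fixed radius, a bounded-overlap covering with the pair-counting bound $\sum_i \|u_k\|_{H^{\alpha}(B_i)}^{2} \leq N_0 \|u_k\|^{2}$, and a final $L^{2}$--$L^{q_0}$ interpolation to reach the exponents $2 < q < 2 + \frac{4\alpha}{n}$) is exactly the classical Lions vanishing lemma adapted to the Gagliardo seminorm, i.e.\ the standard route in the fractional literature. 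The two points you flag as delicate are indeed the only ones where the nonlocal setting demands care: the overlap count must be done on pairs $(x,z)$ lying in a common ball rather than on points, and the naive summation over the cover only closes when the seminorm enters with power $\theta q/2 \geq 1$, so the remaining exponents must be recovered by interpolation against the bounded $L^{2}$ norms.
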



\section{Ground state}

We prove the existence of weak solution of $(P')$ finding a critical point of  the functional $I_{\rho_\epsilon}$. Using the embeddings given in Theorem \ref{FStm1}, it follows that  the functional $I_{\rho_\epsilon}$ is of class $C^{1}(X^\epsilon, \mathbb{R})$ with
$$
I'_{\rho_\epsilon}(u)v = \langle u,v \rangle_{\rho_\epsilon} - \int_{\mathbb{R}^{n}}K(\epsilon x)|u(x)|^{p-1}u(x)v(x)dx,\;\;\forall \;v\in X^{\epsilon}
$$
where
$$
\langle u,v \rangle_{\rho_\epsilon} =\int_{\mathbb{R}^n}\int_{B(0, \rho_\epsilon (x))} \frac{[u(x+z) - u(x)][v(x+z) - v(x)]}{|z|^{n+2\alpha}}dzdx +  \int_{\mathbb{R}^n}Q(\epsilon x)uv dx .
$$

Using well known arguments, it follows that $I_{\rho_\epsilon}$ verifies the mountain pass geometry. Then, there is a $(PS)_{c}$ sequence $\{u_k\} \subset X^{\epsilon}$ such that 
\begin{equation}\label{MPC1}
I_{\rho_\epsilon}(u_k) \to C_{\rho_\epsilon} \quad \mbox{and}\quad I'_{\rho_\epsilon} (u_k) \to 0
\end{equation}
where $C_{\rho_\epsilon}$ is the mountain pass level given by 
$$
C_{\rho_\epsilon} = \inf_{\gamma \in \Gamma_{\rho_{\epsilon}} \sup_{t\in [0,1]}} I_{\rho_\epsilon} (\gamma (t))>0
$$
with 
$$
\Gamma_{\rho_\epsilon} = \{ \gamma \in C([0,1], X^{\epsilon}): \gamma(0) = 0,\;\;I_{\rho_\epsilon} (\gamma (1))<0\}.
$$ 
In the sequel, $\mathcal{N}_{\rho_\epsilon}$ denotes the Nehari manifold associated  to the functional $I_{\rho_\epsilon}$, that is, 
$$
\mathcal{N}_{\rho_\epsilon} = \{u\in X^{\epsilon} \backslash \{0\}:\;\; I'_{\rho_\epsilon}(u)u=0\}.
$$
It is easy to see that all non trivial solutions of $(P')$ belongs to $ \mathcal{N}_{\rho_\epsilon}$. Moreover, by using standard arguments, it is possible to prove that 
\begin{equation} \label{ZZ0}
C_{\rho_\epsilon}=\inf_{u \in \mathcal{N}_{\rho_\epsilon}}I_{\rho_\epsilon}(u)
\end{equation}
and there is $\beta>0$ independent of $\epsilon$, such that
\begin{equation}\label{BETA1}
\beta \leq \|u\|_{\rho_\epsilon}^{2}, \quad \forall u \in X^{\epsilon}
\end{equation}
and so,
\begin{equation} \label{BETA2}
\beta \leq C_{\rho_\epsilon}, \quad \forall \epsilon >0.
\end{equation}

From (\ref{ZZ0}), if $C_{\rho_\epsilon}$ is a critical value of $I_{\rho_\epsilon}$ then it is the least energy critical value of $I_{\rho_\epsilon}$. Hereafter, we say that $C_{\rho_\epsilon}$ is the {\it ground state level} of $I_{\rho_\epsilon}$.


Now, we consider the following equation
\begin{equation}\label{05}
(-\Delta)^{\alpha} u + Q(\xi) u = K(\xi)|u|^{p-1}u, \;\;x\in \mathbb{R}^n,
\end{equation}
where $\xi \in \mathbb{R}^n$ is regard as a parameter instead of an independent variable. We define the energy functional  $J_\xi : H^{\alpha}(\R^n) \to \R$ associated with (\ref{05}) by 
\begin{equation}\label{n02}
\begin{array}{l}
J_\xi (u) =\displaystyle \frac{1}{2}\left( \int_{\mathbb{R}^n}\int_{\mathbb{R}^n} \frac{|u(x+z) - u(x)|^2}{|z|^{n+2\alpha}}dzdx + \int_{\mathbb{R}^n} Q(\xi)|u(x)|^2dx \right) \\
\mbox{}\\
\;\;\;\;\;\;\;\;\;\;\;\;\; - \displaystyle \frac{1}{p+1}\int_{\mathbb{R}^n} K(\xi)|u(x)|^{p+1}dx.
\end{array}.
\end{equation} 
Let 
$$
C(\xi) = \inf_{u \in \mathcal{N}_{\xi}} J_{\xi}(u)
$$
the ground state energy  associated with (\ref{05}), where $\mathcal{N}_\xi$ is the Nehari manifold defined as
$$
\mathcal{N}_\xi = \{u\in H^{\alpha}(\mathbb{R}^n)\setminus \{0\}: J'_{\xi}(u) u = 0\}.
$$

Arguing as above, we see that $C(\xi)>0$ and 
$$
C(\xi) = \inf_{v\in H^{\alpha}(\mathbb{R}^n)\setminus \{0\}} \max_{t>0} J_\xi (tv) = \inf_{\gamma \in \Gamma_\xi} \max_{t\in [0,1]} J_\xi (\gamma (t)),
$$
where 
$$
\Gamma_\xi =\{\gamma \in C([0,1], H^{\alpha}(\mathbb{R}^n)): \;\;\gamma (0) = 0, \;\;J_{\xi}(\gamma (1)) <0\}.
$$

By \cite{PFAQJT}, we know that for each $\xi \in \mathbb{R}^n$, problem (\ref{05}) has a nontrivial nonnegative ground state solution. Thus, $C(\xi)$ is the least critical value of $J_{\xi}$. Next, we will study the continuity of $C(\xi)$.

\begin{Lem}\label{lema1}
The function $\xi \to C(\xi)$ is continuous. 
\end{Lem}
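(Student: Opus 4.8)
The plan is to show continuity of $\xi \mapsto C(\xi)$ by exploiting the variational characterization $C(\xi) = \inf_{v \neq 0} \max_{t>0} J_\xi(tv)$ together with the explicit scaling structure of the problem. Since the differential operator $(-\Delta)^\alpha$ in (\ref{05}) does not depend on $\xi$ — only the coefficients $Q(\xi)$ and $K(\xi)$ do — the ground state level $C(\xi)$ is in fact an explicit function of the two numbers $Q(\xi)$ and $K(\xi)$. First I would fix $\xi_0 \in \mathbb{R}^n$ and a sequence $\xi_k \to \xi_0$; by continuity of $Q$ and $K$ (hypothesis $(H_2)$) we have $Q(\xi_k) \to Q(\xi_0)$ and $K(\xi_k) \to K(\xi_0)$, so it suffices to prove that $C$ depends continuously on the pair $(Q(\xi), K(\xi))$.

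The cleanest route is to make the scaling dependence explicit. For the functional
\[
J_\xi(u) = \frac{1}{2}\left( [u]_\alpha^2 + Q(\xi)\int_{\mathbb{R}^n}|u|^2\,dx \right) - \frac{1}{p+1}K(\xi)\int_{\mathbb{R}^n}|u|^{p+1}\,dx,
\]
where $[u]_\alpha^2$ denotes the Gagliardo seminorm, one checks that if $w$ is a ground state for the normalized problem with $Q=K=1$ (that is, for (\ref{02}) with $V\equiv 1$), then $u(x) = \sigma\, w(\lambda x)$ solves (\ref{05}) for suitable $\sigma, \lambda$ chosen as powers of $Q(\xi)$ and $K(\xi)$. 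Carrying out this change of variables and evaluating $J_\xi$ on the Nehari manifold yields a formula of the shape
\[
C(\xi) = \theta\, \frac{Q(\xi)^{\frac{p+1}{p-1}-\frac{n}{2\alpha}}}{K(\xi)^{\frac{2}{p-1}}},
\]
with $\theta>0$ a constant depending only on $n,\alpha,p$ and the fixed normalized ground state energy. The exponents here are exactly those appearing in the sufficient condition following Theorem \ref{T1}, which strongly suggests this explicit formula is the intended mechanism. Continuity of $C$ is then immediate, since the right-hand side is a continuous function of $Q(\xi)$ and $K(\xi)$ (both bounded away from $0$ by $(H_2)$), composed with the continuous maps $Q$ and $K$.

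If one prefers to avoid committing to the exact scaling constant, the same conclusion follows by a soft two-sided estimate. To get $\limsup_k C(\xi_k) \le C(\xi_0)$, I would take a near-optimal test function $v$ for $C(\xi_0)$, note that $\max_{t>0} J_{\xi_k}(tv)$ has the closed form $\left(\tfrac12 - \tfrac{1}{p+1}\right)\left([v]_\alpha^2 + Q(\xi_k)\|v\|_2^2\right)^{\frac{p+1}{p-1}} / \left(K(\xi_k)\|v\|_{p+1}^{p+1}\right)^{\frac{2}{p-1}}$, and pass to the limit using $Q(\xi_k)\to Q(\xi_0)$, $K(\xi_k)\to K(\xi_0)$. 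For the reverse inequality $\liminf_k C(\xi_k)\ge C(\xi_0)$, I would take ground states $u_k$ realizing $C(\xi_k)$, use $(H_2)$ to bound the $Q(\xi_k),K(\xi_k)$ uniformly so that the $\{u_k\}$ are bounded in $H^\alpha(\mathbb{R}^n)$, and again use the Nehari max-formula to compare $\max_t J_{\xi_0}(t u_k)$ with $C(\xi_k)=J_{\xi_k}(u_k)$, the two differing only through the coefficients which converge.

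The main obstacle, and the reason the explicit-formula route is preferable, is that a direct compactness argument on $\{u_k\}$ in $\mathbb{R}^n$ faces loss of mass at infinity: without a concentration-compactness step (Lemma \ref{FSlem1}) one cannot directly extract a nonzero limit realizing $C(\xi_0)$. The elegance of the scaling formula is that it sidesteps compactness entirely, reducing the whole statement to continuity of an elementary function of two scalars. I would therefore present the proof via the explicit expression for $C(\xi)$, deriving it by the rescaling $u \mapsto \sigma w(\lambda \cdot)$ and reading off continuity directly.
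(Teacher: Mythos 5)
Your proof is correct, but it takes a genuinely different route from the paper's. The paper proves Lemma \ref{lema1} directly: it takes ground states $v_r$ realizing $C(\xi_r)$, treats separately the subsequences along which $C(\xi_r)\geq C(\xi_0)$ and $C(\xi_r)\leq C(\xi_0)$, and in the delicate case must extract a nontrivial weak limit --- using translations together with the concentration-compactness Lemma \ref{FSlem1} to rule out vanishing --- and then combines Fatou's lemma with projections onto the Nehari manifolds to squeeze $\lim_r C(\xi_r)$ onto $C(\xi_0)$. Your argument instead establishes the scaling identity $C(\xi)=D\,Q(\xi)^{\frac{p+1}{p-1}-\frac{n}{2\alpha}}K(\xi)^{-\frac{2}{p-1}}$ and reads continuity off from $(H_2)$ and the continuity of $Q$ and $K$; this identity is precisely the paper's Lemma \ref{lema2}, proved immediately \emph{after} the continuity lemma via the rescaling $w(x)=[Q(\xi)/K(\xi)]^{\frac{1}{p-1}}u(x/\sigma)$, $\sigma^{2\alpha}=1/Q(\xi)$, and since that proof nowhere uses Lemma \ref{lema1}, your ordering (formula first, continuity as a corollary) is not circular within the paper's architecture. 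What each approach buys: yours is much shorter and sidesteps compactness altogether --- indeed the rescaling is a bijection between the Nehari manifolds of $J$ and $J_\xi$ that multiplies energies by the fixed constant, so the two infima are exactly proportional and one does not even need the infima to be attained; the paper's direct argument is more robust, in that it would survive modifications destroying exact homogeneity (a $\xi$-dependent scope function in the frozen problem, or a general nonlinearity $f(x,u)$ in place of the pure power), and the Fatou/Nehari-comparison technique it rehearses is exactly what is re-used later in Lemmas \ref{Clm4} and \ref{Clm5}. Your fallback ``soft'' two-sided estimate is also sound: the closed form $\max_{t>0}J_\xi(tv)=\left(\frac{1}{2}-\frac{1}{p+1}\right)A(\xi,v)^{\frac{p+1}{p-1}}B(\xi,v)^{-\frac{2}{p-1}}$, with $A(\xi,v)$ the quadratic part and $B(\xi,v)=K(\xi)\int_{\mathbb{R}^n}|v|^{p+1}dx$, makes the minimizers $u_k$ enter only through scalar norms that are bounded above (by boundedness of $C(\xi_k)$) and bounded below away from zero (by the uniform mountain-pass geometry from $(H_2)$), so no concentration-compactness is needed there either.
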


\begin{proof}
Set $\{\xi_r \} \subset \mathbb{R}^n$ and $\xi_0 \in \mathbb{R}^n$ with
$$
\xi_r \to \xi_0 \quad \mbox{in} \quad \mathbb{R}^n.
$$
By using the conditions on $\rho$, $Q$ and $K$, we know that
$$
\liminf_{\xi \in \mathbb{R}^n}C(\xi)>0 \quad \mbox{and} \quad \limsup_{\xi \in \mathbb{R}^n}C(\xi)<+\infty.
$$
Next, we denote by $v_r \in H^{\alpha}(\mathbb{R}^n)$ the function which satisfies 
$$
J_{\xi_r}(v_r)=C(\xi_r) \quad \mbox{and} \quad  J'_{\xi_r}(v_r)=0. 
$$
In the sequel, we will consider two sequences $\{\xi_{r_j}\}$ and $\{\xi_{r_k}\}$ such that
$$
C(\xi_{r_j}) \geq C(\xi_0) \quad \forall r_j \eqno{(I)}
$$
and
$$
C(\xi_{r_k}) \leq C(\xi_0) \quad \forall r_k. \eqno{(II)}
$$
\noindent {\bf Analysis of $(I)$:} From the above commentaries, we know that $\{C_(\xi_{r_j})\}$ is bounded. Therefore, there are a subsequence $\{\xi_{{r_j}_i}\} \subset \{\xi_{r_j}\}$ and $C_0>0$ such that  
$$
C(\xi_{{r_j}_{i}}) \to C_0. 
$$
In the sequel, we will use the following notations:
$$
v_i=v_{{r_j}_i} \quad \mbox{and} \quad \xi_i=\xi_{{r_j}_i}.
$$
Thereby,
$$
\xi_i \to \xi_0 \quad \mbox{and} \quad C(\xi_i) \to C_0.
$$
\noindent {\bf Claim A:} \, $C_0=C(\xi_0)$. From (I), 
$$
\lim_{i}C(\xi_i) \geq C(\xi_0)
$$
and so,
\begin{equation} \label{C0}
C_0 \geq C(\xi_0).
\end{equation}
In the sequel, we set $w_0 \in H^{\alpha}(\mathbb{R}^n)$ be a function satisfying 
$$
J_{\xi_0}(w_0)=C(\xi_0) \quad \mbox{and} \quad J'_{\xi_0}(w_0)=0.
$$
Moreover, we denote by $t_{i}>0$ the real number which verifies  
$$
J_{\xi_i}(t_iw_0)=\max_{t \geq 0}J_{\xi_i}(t_iw_0).
$$
Thus, by definition of $C(\xi_0)$, 
$$
C(\xi_i)\leq J_{\xi_i}(t_iw_0).
$$
It is possible to prove that $\{t_i\}$ is a bounded sequence, then without lost of generality we can assume that $t_i \to t_0$. Now, by using the fact that the functions $\rho$ and $K$ are continuous, the Lebesgue's Theorem gives
$$
\lim_{i} J_{\xi_i}(t_iw_0)=J_{\xi_0}(t_0w_0) \leq J_{\xi_0}(w_0)=C(\xi_0),
$$
leading to
\begin{equation}\label{C1}
C_0 \leq C(\xi_0).
\end{equation}
From (\ref{C0})-(\ref{C1}),
$$
C(\xi_0)=C_0.
$$
The above study implies that
$$
\lim_{i}C(\xi_{{r_j}_i})=C(\xi_0).
$$
\noindent {\bf Analysis of $(II)$:} \, By using the definition of $\{v_r\}$, it is easy to prove that $\{v_r\}$ is a bounded sequence in  $H^{\alpha}(\mathbb{R}^n)$. Consequently, there is $v_0 \in H^{\alpha}(\mathbb{R}^n)$ such that
$$
v_r \rightharpoonup v_0 \quad \mbox{in} \quad H^{\alpha}(\mathbb{R}^n).
$$
By using Lemma \ref{FSlem1}, we can assume that $v_0 \not=0$, because for any translation of the type $\tilde{v}_n(x)=v_n(x+y_n)$ also satisfies
$$
J_{\xi_r}(\tilde{v}_r)=C(\xi_r) \quad \mbox{and} \quad J'_{\xi_r}(\tilde{v}_r)=0.
$$
The above information permits to conclude that $v_0$ is a nontrivial solution of the problem
\begin{eqnarray}\label{Eq0101}
(-\Delta)^{\alpha}u + Q(\xi_0)u = K(\xi_0)|u|^{p-1}u  \mbox{ in } \mathbb{R}^{n}, \;\;
u \in H^{\alpha}(\mathbb{R}^{n}).
\end{eqnarray}
By Fatous' lemma, it is possible to prove that 
\begin{equation} \label{C6}
\liminf_{r}J_{\xi_r}(v_r) \geq J_{\xi_0}(v_0).
\end{equation}
On the other hand, there is $s_r >0$ such that
$$
C(\xi_r) \leq J_{\xi_r}(s_r v_0) \quad \forall r.
$$
So
\begin{equation} \label{C7}
\limsup_{r}J_{\xi_r}(v_r)=\limsup_{r}C(\xi_r) \leq \limsup_{r}J_{\xi_r}(s_r v_0)=J_{\xi_0}(v_0).
\end{equation}
From (\ref{C6})-(\ref{C7}), 
$$
\lim_{r}J_{\xi_n}(v_n)=J_{\xi_0}(v_0).
$$
The last limit yields
$$
v_r \to v_0 \quad \mbox{in} \quad H^{\alpha}(\mathbb{R}^n).
$$
Since $\{C(\xi_{{r_j}_k})\}$ is bounded, there are a subsequence $\{\xi_{{r_j}_k}\} \subset \{\xi_{r_j}\}$ and $C_*>0$ such that  
$$
C(\xi_{{r_j}_{k}}) \to C_*. 
$$
In the sequel, we will use the following notations:
$$
v_k=v_{{r_j}_k} \quad \mbox{and} \quad \xi_k=\xi_{{r_j}_k}.
$$
Thus,
$$
v_k \to v_0, \quad \xi_k \to \xi_0 \quad \mbox{and} \quad C(\xi_k) \to C_*.
$$

In what follows, we denote by $t_{k}>0$ the real number which verifies  
$$
J_{\xi_0}(t_kv_k)=\max_{t \geq 0}J_{\xi_0}(tv_k).
$$
Thus, by definition of $C(\xi_0)$, 
$$
C(\xi_0)\leq J_{\xi_0}(t_kv_k).
$$
It is possible to prove that $\{t_k\}$ is a bounded sequence, then without lost of generality we can assume that $t_k \to t_*$. Now, by using the fact that the functions $\rho$ and $K$ are continuous, the Lebesgue's Theorem gives
$$
\lim_{k} J_{\xi_0}(t_kv_k)=J_{\xi_0}(t_*v_0)= \lim_{k} J_{\xi_k}(t_kv_k) \leq \lim_{k}C(\xi_k)=C_*.
$$
Thereby,
\begin{equation} \label{C3}
C(\xi_0)\leq C_*.
\end{equation}
On the other hand,  from $(II)$, 
$$
\lim_{k}C(\xi_k) \leq C(\xi_0)
$$
leading to 
\begin{equation} \label{C4}
C_* \geq C(\xi_0).
\end{equation}
From (\ref{C3})-(\ref{C4}), 
$$
C_* = C(\xi_0).
$$
The above study implies that
$$
\lim_{k}C(\xi_{{n_j}_k})=C(\xi_0).
$$
From $(I)$ and $(II)$,
$$
\lim_{r}(\xi_r)=C(\xi_0),
$$
showing the lemma.
\end{proof}



In what follows, we denote by  $D$ the ground state level of the function $J: H^{\alpha}(\R^n) \to \mathbb{R}$ given by
$$
J(u) = \frac{1}{2}\left( \int_{\mathbb{R}^n}\int_{\mathbb{R}^n} \frac{|u(x) - u(z)|^2}{|x-z|^{n+2\alpha}}dz dx + \int_{\mathbb{R}^n} |u(x)|^{2}dx\right) - \frac{1}{p+1}\int_{\mathbb{R}^n}|u|^{p+1}dx
$$	

Using the above notations, we have the following lemma

\begin{Lem}\label{lema2} The functions $C(\xi)$ verifies the following relation
\begin{equation}\label{g02}
C(\xi) = \frac{Q(\xi)^{\frac{p+1}{p-1} - \frac{n}{2\alpha}}}{K(\xi)^{\frac{2}{p-1}}}D, \quad \forall \xi \in\R^n.
\end{equation}
\end{Lem}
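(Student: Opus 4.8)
The plan is to establish the scaling relation \eqref{g02} by relating the ground state level $C(\xi)$ of the parametrized problem \eqref{05} to the ground state level $D$ of the normalized functional $J$ through an explicit change of variables. The key observation is that \eqref{05}, namely $(-\Delta)^\alpha u + Q(\xi)u = K(\xi)|u|^{p-1}u$, can be transformed into the normalized equation $(-\Delta)^\alpha w + w = |w|^{p-1}w$ by a rescaling of the form $u(x) = A\, w(Bx)$ for suitable constants $A = A(\xi)$ and $B = B(\xi)$ depending on $Q(\xi)$ and $K(\xi)$. First I would substitute this ansatz into \eqref{05}, using the homogeneity of the fractional Laplacian under dilations, i.e. that $(-\Delta)^\alpha[w(B\,\cdot)](x) = B^{2\alpha}\big((-\Delta)^\alpha w\big)(Bx)$, to read off the two algebraic conditions on $A$ and $B$ that make the transformed equation reduce to the normalized one. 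Matching the coefficient of the linear term forces $B^{2\alpha} = Q(\xi)$, hence $B = Q(\xi)^{1/(2\alpha)}$, and matching the nonlinear term then determines $A$ in terms of $Q(\xi)$ and $K(\xi)$.

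Next I would track how the energy functional $J_\xi$ transforms under this change of variables. The point is that the map $w \mapsto u = A\,w(B\,\cdot)$ is a bijection between the Nehari manifolds $\mathcal{N}_\xi$ and $\mathcal{N}$ (the Nehari manifold of $J$), and that the three integrals appearing in $J_\xi(u)$ — the Gagliardo seminorm, the $L^2$ term weighted by $Q(\xi)$, and the $L^{p+1}$ term weighted by $K(\xi)$ — all rescale by explicit powers of $B$ (through the Jacobian $B^{-n}$ of the substitution $y = Bx$) and of $A$. Carrying this out, each term of $J_\xi(A\,w(B\,\cdot))$ equals a common factor, a power of $Q(\xi)$ and $K(\xi)$, times the corresponding term of $J(w)$. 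Because the scaling factors of the kinetic and the $L^2$ part coincide once $B^{2\alpha} = Q(\xi)$, one obtains cleanly that
\begin{equation}\label{scalefactor}
J_\xi\big(A\,w(B\,\cdot)\big) = \frac{Q(\xi)^{\frac{p+1}{p-1}-\frac{n}{2\alpha}}}{K(\xi)^{\frac{2}{p-1}}}\; J(w),
\end{equation}
where the exponents of $A$ and $B$ combine precisely into the quotient appearing in \eqref{g02}.

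Finally, since the rescaling is an order-preserving bijection between $\mathcal{N}_\xi$ and $\mathcal{N}$ that multiplies the energy by the fixed positive constant in \eqref{scalefactor}, taking the infimum over the respective Nehari manifolds yields $C(\xi) = \frac{Q(\xi)^{(p+1)/(p-1)-n/(2\alpha)}}{K(\xi)^{2/(p-1)}}\,D$, which is exactly \eqref{g02}. Here I would use the characterization $C(\xi) = \inf_{\mathcal{N}_\xi} J_\xi$ and $D = \inf_{\mathcal{N}} J$ established earlier in the section, together with the fact that the constant is independent of $w$ and hence factors out of the infimum.

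The main obstacle, and the part deserving the most care, is the bookkeeping of exponents: I must verify that the power of $A$ and the Jacobian power $B^{-n}$ arising in each of the three terms really do collapse to a single common factor, and that this factor equals $Q(\xi)^{(p+1)/(p-1)-n/(2\alpha)}\,K(\xi)^{-2/(p-1)}$. This requires solving the coupled relations $B^{2\alpha}=Q(\xi)$ and $A^{p-1}B^{2\alpha}=K(\xi)$ (equivalently $A^{p-1}=K(\xi)/Q(\xi)$, so $A = \big(K(\xi)/Q(\xi)\big)^{1/(p-1)}$) and then computing the exponent $\frac{p+1}{p-1}-\frac{n}{2\alpha}$ as it emerges from $A^2 B^{-n}$ in the quadratic terms and checking consistency with $A^{p+1}B^{-n}$ in the nonlinear term. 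Care is also needed to confirm the seminorm term scales with the same $Q(\xi)$-power as the $L^2$ term, which is precisely what the choice $B^{2\alpha}=Q(\xi)$ guarantees; once this is checked the rest is routine.
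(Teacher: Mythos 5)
Your overall strategy is exactly the paper's: rescale a ground state of the normalized functional $J$ via $u(x)=A\,w(Bx)$ with $B^{2\alpha}=Q(\xi)$, check that all three energy terms pick up one common factor, and transport the infimum between the Nehari manifolds. However, your determination of $A$ contains an inversion that, as written, breaks the argument. Substituting $u(x)=A\,w(Bx)$ into $(-\Delta)^\alpha u + Q(\xi)u = K(\xi)|u|^{p-1}u$ and dividing by $AB^{2\alpha}$ gives
\[
(-\Delta)^\alpha w + \frac{Q(\xi)}{B^{2\alpha}}\,w \;=\; \frac{K(\xi)A^{p-1}}{B^{2\alpha}}\,|w|^{p-1}w,
\]
so the correct coupled relations are $B^{2\alpha}=Q(\xi)$ and $K(\xi)A^{p-1}=B^{2\alpha}$, i.e. $A=\bigl(Q(\xi)/K(\xi)\bigr)^{1/(p-1)}$ --- precisely the prefactor $\left[Q(\xi)/K(\xi)\right]^{1/(p-1)}$ used in the paper's proof (there written with $\sigma=1/B$, $\sigma^{2\alpha}=1/Q(\xi)$). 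You instead wrote $A^{p-1}B^{2\alpha}=K(\xi)$, i.e. $A=\bigl(K(\xi)/Q(\xi)\bigr)^{1/(p-1)}$, which is the relation belonging to the opposite direction of the rescaling (expressing a normalized solution through a solution of the $\xi$-problem), not to your ansatz.

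This is not merely cosmetic: with your $A$, the nonlinear term scales by $K(\xi)A^{p+1}B^{-n}=\frac{K(\xi)^2}{Q(\xi)}A^2B^{-n}$, while the two quadratic terms scale by $Q(\xi)A^2B^{-n}$; these coincide only when $K(\xi)=Q(\xi)$. Hence the three terms do not collapse to a common factor, the map is not a bijection of Nehari manifolds, and the constant you would extract, $Q(\xi)^{\frac{p-3}{p-1}-\frac{n}{2\alpha}}K(\xi)^{\frac{2}{p-1}}$, differs from the one in \eqref{g02} (the exponent of $K$ even has the wrong sign). The fix is one line --- replace $A$ by $\bigl(Q(\xi)/K(\xi)\bigr)^{1/(p-1)}$ --- after which every remaining step you describe works: the common factor becomes $Q(\xi)A^2B^{-n}=Q(\xi)^{\frac{p+1}{p-1}-\frac{n}{2\alpha}}K(\xi)^{-\frac{2}{p-1}}$, the Nehari correspondence holds, and taking infima reproduces the paper's conclusion.
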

\begin{proof}
Let $u \in H^{\alpha}(\R^n)$ be a function verifying 
$$
J(u) = D \quad \mbox{and} \quad J'(u) = 0.
$$
For each $\xi \in \R^n$ fixed, let $\sigma^{2\alpha} = \frac{1}{Q(\xi)}$ and define 
$$
w(x) = \left[ \frac{Q(\xi)}{K(\xi)} \right]^{\frac{1}{p-1}}u(\frac{x}{\sigma}).
$$
Then, doing the change of variable $x = \sigma \tilde{x}$ and $z = \sigma \tilde{z}$ we obtain
$$
\begin{aligned}
&J_\xi (w) =\frac{1}{2}\left(\int_{\mathbb{R}^n}\int_{\mathbb{R}^n}\frac{|w(x+z) - w(x)|^{2}}{|z|^{n+2\alpha}}dz dx + \int_{\mathbb{R}^n}Q(\xi)w^2dx \right) - \frac{1}{p+1}\int_{\mathbb{R}^n}K(\xi)|w|^{p+1}dx\\
&= \frac{Q(\xi)}{2}\left( \sigma^{2\alpha}\int_{\mathbb{R}^n}\int_{\mathbb{R}^n}\frac{|w(x+z) - w(x)|^2}{|z|^{n+2\alpha}}dz dx + \int_{\mathbb{R}^n}w^2(x)dx\right) - \frac{1}{p+1}\int_{\mathbb{R}^n}K(\xi)|w|^{p+1}dx \\
&= \frac{Q(\xi)^{\frac{p+1}{p-1}}}{K(\xi)^{\frac{2}{p-1}}}\left[\left( \frac{\sigma^{2\alpha}}{2}\int_{\mathbb{R}^n}\int_{\mathbb{R}^n}\frac{|u(\frac{x}{\sigma} + \frac{z}{\sigma}) - u(\frac{x}{\sigma})|^2}{|z|^{n+2\alpha}}dz dx + \frac{1}{2}\int_{\mathbb{R}^n} |u(\frac{x}{\sigma})|^{2}dx \right) - \frac{1}{p+1}\int_{\mathbb{R}^n}|u(\frac{x}{\sigma})|^{p+1}dx\right]\\
&= \frac{Q(\xi)^{\frac{p+1}{p-1} - \frac{n}{2\alpha}}}{K(\xi)^{\frac{2}{p-1}}} J(u).
\end{aligned}
$$
A similar argument also gives $J'_\xi(w)(w)=0$, from where it follows 
$$
C(\xi) \leq \frac{Q(\xi)^{\frac{p+1}{p-1} - \frac{n}{2\alpha}}}{K(\xi)^{\frac{2}{p-1}}}D, \quad \forall \xi \in\R^n.
$$
The reverse inequality is obtained of the same way, finishing the proof.

\end{proof}

As a byproduct of the last proof, we have the following corollary

\begin{Cor}\label{ntakey}
By Lemma \ref{lema2}, if there is $\xi_0 \in \mathbb{R}^n$ such that
$$
\frac{Q(\xi_0)^{\frac{p+1}{p-1}-\frac{n}{2\alpha}}}{K(\xi_0)^{\frac{2}{p-1}}} < \frac{Q_\infty^{\frac{p+1}{p-1}-\frac{n}{2\alpha}}}{K_\infty^{\frac{2}{p-1}}},
$$
we have
$$
\inf_{\xi \in \mathbb{R}^n}C(\xi) < \liminf_{|\xi| \to +\infty}C(\xi) = C(\infty),
$$
where $C(\infty)$ is the mountain pass level of the functionals $J_\infty:H^{\alpha}(\mathbb{R}^n) \to \mathbb{R}$ given by
$$
J_\infty(u) = \frac{1}{2}\left( \int_{\mathbb{R}^n}\int_{\mathbb{R}^n} \frac{|u(x) - u(z)|^2}{|x-z|^{n+2\alpha}}dz dx + \int_{\mathbb{R}^n} Q_\infty |u|^2dx\right) - \frac{1}{p+1}\int_{\mathbb{R}^n}K_\infty |u|^{p+1}dx.
$$
\end{Cor}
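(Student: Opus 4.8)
The plan is to read everything off the explicit formula supplied by Lemma \ref{lema2}, so that the corollary becomes an elementary computation with limits. Write
$$
g(\xi)=\frac{Q(\xi)^{\frac{p+1}{p-1}-\frac{n}{2\alpha}}}{K(\xi)^{\frac{2}{p-1}}},
$$
so that Lemma \ref{lema2} reads $C(\xi)=g(\xi)\,D$ for every $\xi\in\mathbb{R}^n$, where $D>0$ is the ground state level of $J$ (positivity of $D$ follows from the mountain pass geometry, exactly as for $C_{\rho_\epsilon}$ in \eqref{BETA2}). By $(H_2)$ the functions $Q,K$ are bounded and bounded away from $0$, so $g$ is continuous, bounded and strictly positive on $\mathbb{R}^n$.

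First I would compute the behaviour of $C$ at infinity. By $(H_0)$ we have $Q(\xi)\to Q_\infty$ and $K(\xi)\to K_\infty$ as $|\xi|\to\infty$; since the map $(q,k)\mapsto q^{\frac{p+1}{p-1}-\frac{n}{2\alpha}}k^{-\frac{2}{p-1}}$ is continuous on the compact range of $(Q,K)$, which avoids $0$ by $(H_2)$, it follows that $g(\xi)\to g_\infty$ as $|\xi|\to\infty$, where $g_\infty=\frac{Q_\infty^{\frac{p+1}{p-1}-\frac{n}{2\alpha}}}{K_\infty^{\frac{2}{p-1}}}$. Multiplying by $D$ shows that the full limit $\lim_{|\xi|\to\infty}C(\xi)=g_\infty D$ exists, so in particular $\liminf_{|\xi|\to\infty}C(\xi)=g_\infty D$. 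Next I would identify this value with $C(\infty)$: the functional $J_\infty$ is precisely the functional $J_\xi$ of \eqref{n02} with the constant potentials $Q_\infty,K_\infty$ in place of $Q(\xi),K(\xi)$, so the rescaling $w(x)=[Q_\infty/K_\infty]^{1/(p-1)}u(x/\sigma)$ with $\sigma^{2\alpha}=1/Q_\infty$ used in the proof of Lemma \ref{lema2} applies verbatim and yields $C(\infty)=g_\infty D$. This gives the asserted equality $\liminf_{|\xi|\to\infty}C(\xi)=C(\infty)$.

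Finally, the strict inequality is immediate. The hypothesis is exactly $g(\xi_0)<g_\infty$, and since $D>0$ we obtain $\inf_{\xi\in\mathbb{R}^n}C(\xi)\le C(\xi_0)=g(\xi_0)\,D<g_\infty D=C(\infty)=\liminf_{|\xi|\to\infty}C(\xi)$, which is the claim. There is essentially no serious obstacle once Lemma \ref{lema2} is in hand; the only points that deserve a word of justification are that $D>0$ and that the limiting constant-coefficient problem $J_\infty$ indeed falls under the scope of the scaling identity of Lemma \ref{lema2}, i.e. that the same change of variables computes $C(\infty)$ as $g_\infty D$.
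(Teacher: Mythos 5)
Your proposal is correct and follows essentially the same route as the paper: the corollary is presented there as an immediate byproduct of the scaling identity of Lemma \ref{lema2}, which is exactly what you use. Your write-up simply makes explicit the routine details the paper leaves tacit, namely that $(H_0)$ and $(H_2)$ give $C(\xi)\to Q_\infty^{\frac{p+1}{p-1}-\frac{n}{2\alpha}}K_\infty^{-\frac{2}{p-1}}D$ as $|\xi|\to\infty$, that the same rescaling applied to the constant-coefficient functional $J_\infty$ identifies this value with $C(\infty)$, and that the hypothesis together with $D>0$ yields the strict inequality.
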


\vspace{0.2 cm}

The next lemma studies the behavior of function $C_{\rho_{\epsilon}}(\xi)$ when $\epsilon$ goes to 0. 

\begin{Lem} \label{LIMITC} 
$\displaystyle \limsup_{\epsilon \to 0}C_{\rho_{\epsilon}} \leq  \inf_{\xi \in \R^n}C(\xi)$. Hence, $\displaystyle \limsup_{\epsilon \to 0}C_{\rho_{\epsilon}} <  C(\infty)$.	
\end{Lem}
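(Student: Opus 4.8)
The plan is to bound $C_{\rho_\epsilon}$ from above by testing $I_{\rho_\epsilon}$ along a single, well-chosen ray. Recall the standard characterization stated before this lemma, which gives $C_{\rho_\epsilon} \leq \max_{t \geq 0} I_{\rho_\epsilon}(tv)$ for every $v \in X^\epsilon \setminus \{0\}$. Fix an arbitrary $\xi \in \mathbb{R}^n$ and let $w \in H^\alpha(\mathbb{R}^n)$ be a ground state of $J_\xi$, so $J_\xi(w)=C(\xi)$, $w \in \mathcal{N}_\xi$, and consequently $\max_{t\geq 0}J_\xi(tw)=C(\xi)$. I would then transplant $w$ to the region where $Q(\epsilon\,\cdot)$ and $K(\epsilon\,\cdot)$ are nearly constant and equal to $Q(\xi),K(\xi)$: set $y_\epsilon=\xi/\epsilon$ and $v_\epsilon(x)=w(x-y_\epsilon)$. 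Since $\epsilon x \approx \xi$ on the bulk of the support of $v_\epsilon$, the expectation is that $I_{\rho_\epsilon}(tv_\epsilon) \to J_\xi(tw)$ as $\epsilon\to 0$, uniformly for $t$ in compact sets, from which the first assertion will follow.

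The heart of the argument is to track the three ingredients of $I_{\rho_\epsilon}(v_\epsilon)$ after the change of variables $x=y+y_\epsilon$, which turns $v_\epsilon(x+z)-v_\epsilon(x)$ into $w(y+z)-w(y)$ and replaces the scope radius by $\tfrac{1}{\epsilon}\rho(\epsilon y+\xi)\ge \rho_0/\epsilon$ thanks to $(H_1)$. The quadratic and nonlinear potential terms become $\int Q(\epsilon y+\xi)|w|^2\,dy$ and $\int K(\epsilon y+\xi)|w|^{p+1}\,dy$, which converge to $Q(\xi)\int|w|^2$ and $K(\xi)\int|w|^{p+1}$ by dominated convergence, using $(H_2)$ and $w\in H^\alpha(\mathbb{R}^n)\hookrightarrow L^2\cap L^{p+1}$. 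For the regional kinetic term, its difference from the full Gagliardo energy of $w$ is the tail $\int_{\mathbb{R}^n}\int_{|z|\ge \rho_0/\epsilon}\frac{|w(y+z)-w(y)|^2}{|z|^{n+2\alpha}}\,dz\,dy$, which after $|w(y+z)-w(y)|^2\le 2(|w(y+z)|^2+|w(y)|^2)$ and Fubini is bounded by $4\|w\|_{L^2}^2\,\frac{|S^{n-1}|}{2\alpha}(\epsilon/\rho_0)^{2\alpha}\to 0$. Hence $A_\epsilon:=\|v_\epsilon\|_{\rho_\epsilon}^2$ and $B_\epsilon:=\int K(\epsilon x)|v_\epsilon|^{p+1}\,dx$ converge respectively to the $J_\xi$-quantities $A>0$ and $B>0$.

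Since $\max_{t\ge 0}I_{\rho_\epsilon}(tv_\epsilon)=\bigl(\tfrac12-\tfrac{1}{p+1}\bigr)A_\epsilon^{(p+1)/(p-1)}B_\epsilon^{-2/(p-1)}$ is a continuous function of $(A_\epsilon,B_\epsilon)$ and the limits $A,B$ are strictly positive, I conclude $\max_{t\ge0}I_{\rho_\epsilon}(tv_\epsilon)\to \bigl(\tfrac12-\tfrac{1}{p+1}\bigr)A^{(p+1)/(p-1)}B^{-2/(p-1)}=\max_{t\ge0}J_\xi(tw)=C(\xi)$. Therefore $\limsup_{\epsilon\to 0}C_{\rho_\epsilon}\le C(\xi)$, and taking the infimum over $\xi\in\mathbb{R}^n$ yields $\limsup_{\epsilon\to 0}C_{\rho_\epsilon}\le \inf_{\xi}C(\xi)$. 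The final ``Hence'' follows from hypothesis $(C)$ combined with Corollary \ref{ntakey}, which give $\inf_\xi C(\xi)<\liminf_{|\xi|\to\infty}C(\xi)=C(\infty)$, so $\limsup_{\epsilon\to 0}C_{\rho_\epsilon}\le \inf_\xi C(\xi)<C(\infty)$.

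The main obstacle, and the only place where the regional character of the operator genuinely enters, is controlling the kinetic tail \emph{uniformly in the translation} $y_\epsilon$. This is exactly why the lower bound $\rho\ge\rho_0>0$ in $(H_1)$ is essential: it forces $\rho_\epsilon(x)\ge\rho_0/\epsilon$ at every base point, so the scope ball swells to all of $\mathbb{R}^n$ at a rate independent of where $w$ is placed, and the tail estimate depends only on $\|w\|_{L^2}$. One should also record the routine point that the maximizing parameters $t$ remain in a fixed compact interval, which is immediate from $A_\epsilon\to A$ and $B_\epsilon\to B$ with positive limits, so that passing the limit through $\max_t$ is legitimate.
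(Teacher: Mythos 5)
Your proof is correct and follows essentially the same route as the paper: both arguments transplant a ground state $w$ of $J_{\xi}$ via the translation $x \mapsto x - \xi/\epsilon$, bound $C_{\rho_\epsilon}$ by the maximum of $I_{\rho_\epsilon}$ along the ray through the translated function, pass to the limit using $(H_1)$, $(H_2)$, and conclude from the arbitrariness of $\xi$ (with the ``Hence'' part coming from $(C)$ and Lemma \ref{lema2}/Corollary \ref{ntakey}, exactly as you say). The only difference is bookkeeping: the paper tracks the maximizing parameter $t_\epsilon$, proves it is bounded, and identifies $t_\epsilon \to 1$ via uniqueness of the Nehari projection, whereas you evaluate $\max_{t\geq 0}\left(\frac{t^{2}}{2}A_\epsilon - \frac{t^{p+1}}{p+1}B_\epsilon\right)$ in closed form and use continuity in $(A_\epsilon,B_\epsilon)$ --- and you additionally supply the kinetic-tail estimate via $\rho \geq \rho_0$ that the paper leaves implicit.
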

\begin{proof}
 Fix $\xi_0 \in \mathbb{R}^N$ and $w\in H^{\alpha}(\mathbb{R}^n)$ with
	$$
	J_{\xi_0}(w) = \max_{t\geq 0} J_{\xi_0}(tw)=C(\xi_0) \quad \mbox{and} \quad 	J_{\xi_0}'(w)=0
	$$
where
	$$
	J_{\xi_0}(u) = \frac{1}{2}\left( \int_{\mathbb{R}^n}\int_{\mathbb{R}^n} \frac{|u(x) - u(z)|^2}{|x-z|^{n+2\alpha}}dz dx + \int_{\mathbb{R}^n} Q(\xi_0)|u(x)|^{2}dx\right) - 
	\frac{1}{p+1}\int_{\mathbb{R}^n}K(\xi_0)|u|^{p+1}dx.
	$$	
	Then, we take $w_\epsilon (x) = w(x - \frac{\xi_0}{\epsilon})$ and $t_\epsilon >0$ satisfying 
	$$
	C_{\rho_\epsilon} \leq I_{\rho_\epsilon}(t_\epsilon w_\epsilon) = \max_{t\geq 0} I_{\rho_\epsilon}(tw_\epsilon).
	$$
	The change of variable $\tilde{x} = x - \frac{\xi_0}{\epsilon}$ gives   
	$$
	\begin{aligned}
	I_{\rho_\epsilon}(t_\epsilon w_\epsilon) &= \frac{t_{\epsilon}^{2}}{2}\left( \int_{\mathbb{R}^n}\int_{B(0, \frac{1}{\epsilon}\rho (\epsilon x))}\frac{|w_\epsilon(x+z) - w_\epsilon (x)|^2}{|z|^{n+2\alpha}}dxdx + \int_{\mathbb{R}^n}Q(\epsilon x)w_\epsilon^2(x)dx \right)\\
	&-\frac{t_{\epsilon}^{p+1}}{p+1}\int_{\mathbb{R}^n}K(\epsilon x)w_{\epsilon}^{p+1}(x)dx\\
	&= \frac{t_{\epsilon}^{2}}{2}\left( \int_{\mathbb{R}^n}\int_{B(0, \frac{1}{\epsilon}\rho (\epsilon \tilde{x} + \xi_0))} \frac{|w(\tilde{x} + z) - w(\tilde{x})|^2}{|z|^{n+2\alpha}}dz d\tilde{x} + \int_{\mathbb{R}^n} Q(\epsilon \tilde{x} + \xi_0)w^2(\tilde{x})d\tilde{x}\right)\\
	&-\frac{t_{\epsilon}^{p+1}}{p+1}\int_{\mathbb{R}^n}K(\epsilon \tilde{x} + \xi_0)w^{p+1}(\tilde{x})d\tilde{x}.
	\end{aligned}
	$$
	Thereby, considering a sequence $\epsilon_n \to 0$, the fact that $I'_{\rho_{\epsilon_n}}(t_{\epsilon_n} w_{\epsilon_n})(t_{\epsilon_n} w_{\epsilon_n})=0$ yields $\{t_{\epsilon_n}\}$ is bounded. Thus, we can assume that 
	$$
	t_{\epsilon_n} \to t_*>0,
	$$
	for some $t_*>0$.  Using a change variable as above, we can infer that 
	$$
	J_{\xi_0}'(t_*w)(t_*w)=0.
	$$
	On the other hand, we know that $J_{\xi_0}'(w)(w)=0$. Then by uniqueness,  we must have 
	$$
	t_*=1.
	$$
	From this, 
	$$
	I_{\rho_{\epsilon_n}}(t_{\epsilon_n} w_{\epsilon_n}) \to J_{ \xi_0}(w)=C(\xi_0)\;\;\mbox{as}\;\;\epsilon \to 0.
	$$ 
As the point $\xi_0 \in \R^n$ is arbitrary, the lemma is proved.

\end{proof}

\begin{Thm}\label{main1}
For $\epsilon >0$ small enough, the problem $(P')$ has a positive least energy solution.
\end{Thm}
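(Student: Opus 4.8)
The plan is to realize the least energy solution as the mountain pass / Nehari minimizer of $I_{\rho_\epsilon}$ and to recover the compactness that is lost on $\mathbb{R}^n$ by exploiting the strict gap $C_{\rho_\epsilon}<C(\infty)$ supplied by Lemma \ref{LIMITC} and Corollary \ref{ntakey}. First I would take a $(PS)_{C_{\rho_\epsilon}}$ sequence $\{u_k\}\subset X^\epsilon$, whose existence is guaranteed by the mountain pass geometry recorded in (\ref{MPC1}). Boundedness of $\{u_k\}$ is routine: combining $I_{\rho_\epsilon}(u_k)=C_{\rho_\epsilon}+o(1)$ with $I'_{\rho_\epsilon}(u_k)u_k=o(1)\|u_k\|_{\rho_\epsilon}$ and using $p>1$ yields $\left(\frac12-\frac{1}{p+1}\right)\|u_k\|_{\rho_\epsilon}^2\le C_{\rho_\epsilon}+o(1)+o(1)\|u_k\|_{\rho_\epsilon}$. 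Passing to a subsequence, $u_k\rightharpoonup u$ in $X^\epsilon$, and a standard density and continuity argument using Theorem \ref{FStm1} (weak convergence in the linear terms, local compactness in the nonlinear term) shows that the weak limit $u$ is a critical point of $I_{\rho_\epsilon}$, hence a weak solution of $(P')$.

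The heart of the matter is to prove that the weak limit is nontrivial; once $u\neq 0$ we have $u\in\mathcal{N}_{\rho_\epsilon}$, so $I_{\rho_\epsilon}(u)\ge C_{\rho_\epsilon}$, while the weak lower semicontinuity of the norm applied to the identity $I_{\rho_\epsilon}(v)=\left(\frac12-\frac1{p+1}\right)\|v\|_{\rho_\epsilon}^2$ valid on $\mathcal{N}_{\rho_\epsilon}$ gives $I_{\rho_\epsilon}(u)\le\liminf_k I_{\rho_\epsilon}(u_k)=C_{\rho_\epsilon}$, so that $u$ is a ground state. To establish $u\neq 0$ I would argue by contradiction and invoke the concentration-compactness Lemma \ref{FSlem1}. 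If vanishing occurred, i.e. $\sup_{y}\int_{B(y,R)}|u_k|^2\to 0$ for some $R>0$, then $u_k\to 0$ in $L^{p+1}(\mathbb{R}^n)$, and feeding this into $I'_{\rho_\epsilon}(u_k)u_k=o(1)$ forces $\|u_k\|_{\rho_\epsilon}\to 0$, whence $C_{\rho_\epsilon}=0$, contradicting (\ref{BETA2}). Therefore there are $\delta,R>0$ and points $y_k$ with $\int_{B(y_k,R)}|u_k|^2\ge\delta$. If $\{y_k\}$ were bounded, local compactness would give $\int_{B(0,R')}|u|^2\ge\delta$, contradicting $u=0$; hence, along a subsequence, $|y_k|\to\infty$.

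I would then translate, setting $\tilde u_k(x)=u_k(x+y_k)$, so that $\tilde u_k\rightharpoonup w\neq 0$. Since $\epsilon$ is fixed and $|y_k|\to\infty$, the coefficients satisfy $Q(\epsilon(x+y_k))\to Q_\infty$, $K(\epsilon(x+y_k))\to K_\infty$ and $\rho(\epsilon(x+y_k))\to\rho_\infty$, so the translated functionals converge to the limit functional governed by the constant potentials $Q_\infty,K_\infty$ and the regional radius $\rho_\infty/\epsilon$; consequently $w$ is a nontrivial critical point of this limit functional, and a Brezis--Lieb splitting together with Fatou's lemma yields $C_{\rho_\epsilon}=\lim_k I_{\rho_\epsilon}(u_k)\ge C^\infty_\epsilon$, where $C^\infty_\epsilon$ denotes its ground state level. \emph{The main obstacle is precisely this last comparison}: because the radius $\rho_\infty/\epsilon$ is finite when $\rho_\infty<\infty$, one must control $C^\infty_\epsilon$ by the full fractional level $C(\infty)$. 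A scaling and monotonicity argument in the spirit of Lemma \ref{lema2} (a larger range of scope enlarges the quadratic form, hence raises the mountain pass level) shows $C^\infty_\epsilon\nearrow C(\infty)$ as $\epsilon\to 0$; combining this with $\limsup_{\epsilon\to 0}C_{\rho_\epsilon}\le\inf_\xi C(\xi)<C(\infty)$ from Lemma \ref{LIMITC} and Corollary \ref{ntakey} gives $C_{\rho_\epsilon}<C^\infty_\epsilon$ for $\epsilon$ small, the desired contradiction. Hence $u\neq 0$ and $u$ is a least energy solution; the smallness of $\epsilon$ enters exactly through this gap.

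Finally, for positivity, since $I_{\rho_\epsilon}(|v|)\le I_{\rho_\epsilon}(v)$ (the inequality $||v|(x+z)-|v|(x)|\le|v(x+z)-v(x)|$ only decreases the kinetic term) and the Nehari projection is preserved, the whole minimization may be carried out over nonnegative functions, so the ground state can be taken with $u\ge 0$; a strong maximum principle for the regional operator $(-\Delta)^\alpha_{\rho_\epsilon}$ then upgrades this to $u>0$. This completes the proof for $\epsilon$ small enough.
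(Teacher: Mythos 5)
Your proposal is correct in substance and reaches the same conclusion, but it establishes the crucial non-triviality of the weak limit by a genuinely different mechanism than the paper. The paper never translates the $(PS)$ sequence: assuming the weak limit is zero, it uses $(H_0)$ to squeeze $Q,K$ between $Q_\infty\mp\delta$ and $K_\infty\pm\delta$ outside $B(0,R)$, writes $I_{\rho_\epsilon}(tu_k)\geq I^{\delta}_{\epsilon,\infty}(tu_k)+(\mbox{terms supported in }B(0,R/\epsilon))$, kills those local terms using $u_k\to 0$ in $L^{q}_{loc}$, and, after projecting $u_k$ onto the Nehari manifold of the functional at infinity, obtains $C_{\rho_\epsilon}\geq C(\rho(\epsilon x)/\epsilon,Q_\infty,K_\infty)$; note that this comparison functional keeps the \emph{variable} scope $\rho(\epsilon x)/\epsilon$ and only freezes the potentials. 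You instead run the Lions dichotomy: vanishing is excluded via (\ref{BETA2}), the non-vanishing centers $y_k$ must escape to infinity (else the weak limit is nontrivial and you are done), and translating along $y_k$ produces a nontrivial critical point $w$ of the constant-coefficient limit functional with scope $\rho_\infty/\epsilon$, whence $C_{\rho_\epsilon}\geq C^{\infty}_\epsilon$ by Fatou. Both routes then invoke the same asymptotic fact, namely that the ground state level at infinity tends to at least $C(\infty)$ as $\epsilon\to 0$, and both close the argument with the gap $\limsup_{\epsilon\to 0}C_{\rho_\epsilon}\leq\inf_{\xi}C(\xi)<C(\infty)$ from Lemma \ref{LIMITC} and Corollary \ref{ntakey}. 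Your route costs more bookkeeping (convergence of the translated bilinear forms whose integration domains $B(0,\rho(\epsilon x+\epsilon y_k)/\epsilon)$ depend on $k$), but it is the more standard and more informative argument, and it even yields a slightly stronger intermediate inequality since $\rho(\xi)<\rho_\infty$ implies $C(\rho(\epsilon x)/\epsilon,Q_\infty,K_\infty)\leq C^{\infty}_\epsilon$; the paper's route is shorter because the weak-limit-zero hypothesis annihilates the local discrepancy directly, with no need to track where the mass goes.

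Two caveats. First, your justification of $C^{\infty}_\epsilon\nearrow C(\infty)$ by monotonicity alone is thinner than you suggest: enlarging the scope only shows that $C^{\infty}_\epsilon$ is nondecreasing as $\epsilon\downarrow 0$ and bounded above by $C(\infty)$, i.e. the \emph{easy} inequality $\lim_{\epsilon\to 0}C^{\infty}_\epsilon\leq C(\infty)$; the direction you actually need, $\liminf_{\epsilon\to 0}C^{\infty}_\epsilon\geq C(\infty)$, is an interchange of inf and sup and requires a compactness argument (translation invariance of the constant-coefficient problem, a nontrivial weak limit of its ground states, Fatou). This is exactly the step the paper also dismisses as ``a standard argument,'' so your rigor matches the paper's, but be aware it is not a consequence of monotonicity; when $\rho_\infty=\infty$, which $(H_1)$ allows, the step is vacuous since then $C^{\infty}_\epsilon\equiv C(\infty)$. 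Second, your positivity paragraph goes beyond the paper's proof, which claims positivity in the statement but never addresses it: the reduction to nonnegative minimizers via $I_{\rho_\epsilon}(t|v|)\leq I_{\rho_\epsilon}(tv)$ for all $t\geq 0$ is correct, but the strong maximum principle for $(-\Delta)^{\alpha}_{\rho_\epsilon}$ is an assertion you would have to prove, since the operator only ``sees'' within $B(x,\rho_\epsilon(x))$; it does hold, by evaluating the operator at a zero minimum point and then chaining balls using $\rho\geq\rho_0>0$, but it is not a citation-free triviality.
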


\begin{proof} In what follows, we denote by $\{u_k\} \subset H^{\alpha}(\mathbb{R}^N)$ a sequence satisfying
$$
I_{\rho_{\epsilon}}(u_k) \to C_{\rho_{\epsilon}} \quad \mbox{and} \quad I'_{\rho_{\epsilon}}(u_k) \to 0.
$$	
If $u_k \rightharpoonup 0$ in $H^{\alpha}(\mathbb{R}^N)$, then 
\begin{equation}\label{lim0}
u_k \to 0\;\;\mbox{ in}\;\;L_{loc}^{p}(\mathbb{R}^n)\;\;\mbox{for}\;\; p\in [2, 2_{\alpha}^{*}).
\end{equation} 
By $(H_0)$, we can take $\delta, R>0$ such that
\begin{equation}\label{eq17}
Q_\infty - \delta \leq Q(x)\leq Q_\infty + \delta\;\;\mbox{and}\;\;K_\infty - \delta \leq K(x) \leq K_\infty + \delta
\end{equation}
for all $|x|\geq R$. Then, for all $t\geq 0$,
$$
\begin{aligned}
I_{\rho_\epsilon} (tu_k) & = I_{\epsilon, \infty}^{\delta} (tu_k)  + \frac{t^2}{2} \int_{\mathbb{R}^n} [Q(x) - Q_\infty + \delta] |u_k(x)|^2dx \\
&+ \frac{t^{p+1}}{p+1}\int_{\mathbb{R}^n} [K_\infty + \delta -K(x)]|u_k(x)|^{p+1}dx\\
&\geq I_{\epsilon , \infty}^{\delta}(tu_k) + \frac{t^2}{2} \int_{B(0, \frac{R}{\epsilon})} [Q(x) - Q_\infty + \delta] |u_k(x)|^2dx\\
& + \frac{t^{p+1}}{p+1}\int_{B(0, \frac{R}{\epsilon})} [K_\infty + \delta -K(x)]|u_k(x)|^{p+1}dx,
\end{aligned}
$$
where
$$
\begin{aligned}
I_{\epsilon, \infty}^{\delta} (u) &= \frac{1}{2}\left( \int_{\mathbb{R}^n}\int_{B(0, \frac{1}{\epsilon}\rho (\epsilon x)} \frac{|u(x+z) - u(x)|^2}{|z|^{n+2\alpha}}dxdx + \int_{\mathbb{R}^n}(Q_\infty - \delta)|u(x)|^2dx \right) \\&- \frac{1}{p+1}\int_{\mathbb{R}^n} (K_\infty + \delta)|u(x)|^{p+1}dx.
\end{aligned}
$$
Now we know that there exists a bounded sequence $\{\tau_k\}$ such that  
$$
I_{\epsilon, \infty}^{\delta}(\tau_k u_k) \geq C(\frac{\rho(\epsilon x)}{\epsilon}, Q_\infty - \delta, K_\infty + \delta), 
$$
where
$$
C(\frac{\rho(\epsilon x)}{\epsilon}, Q_\infty - \delta, K_\infty + \delta) = \inf_{v\in H^{\alpha}(\mathbb{R})\setminus \{0\}} \sup_{t\geq 0}I_{\epsilon , \infty}^{\delta}(tv)
$$
Thus, 
$$
\begin{aligned}
C_{\rho_{\epsilon}} &\geq C(\frac{\rho(\epsilon x)}{\epsilon}, Q_\infty - \delta, K_\infty + \delta)+ \frac{\tau_k^2}{2} \int_{B(0, \frac{R}{\epsilon})} [Q(x) - Q_\infty + \delta] |u_k(x)|^2dx \\
&+ \frac{\tau_k^{p+1}}{p+1}\int_{B(0, \frac{R}{\epsilon})} [K_\infty + \delta -K(x)]|u_k(x)|^{p+1}dx 
\end{aligned}
$$
Taking the limit as $k\to \infty$, and after $\delta \to 0$, we find
\begin{equation}\label{eq21}
c_{\rho_\epsilon} \geq C(\frac{\rho(\epsilon x)}{\epsilon}, Q_\infty, K_\infty)
\end{equation}
where $C(\frac{\rho(\epsilon x)}{\epsilon}, Q_\infty, K_\infty )$ denotes the mountain pass level of the functional
$$
\begin{array}{l}
I^{0}_{\infty,\xi}(u) = \displaystyle \frac{1}{2}\left( \int_{\mathbb{R}^n}\int_{B(0,\frac{1}{\epsilon}\rho(\epsilon x)} \frac{|u(x+z) - u(x)|^2}{|z|^{n+2\alpha}}dz dx + \int_{\mathbb{R}^n} Q_\infty |u|^2dx\right) - \\
\mbox{}\\
\;\;\;\;\;\;\;\;\;\;\;\;\; \displaystyle \frac{1}{p+1}\int_{\mathbb{R}^n}K_\infty u^{p+1}dx.
\end{array}
$$
A standard argument shows that
$$
\liminf_{\epsilon \to 0}C(\frac{\rho(\epsilon x)}{\epsilon}, Q_\infty, K_\infty) \geq C(\infty).
$$ 
Therefore, if there is $\epsilon_n \to 0$ such that the $(PS)_{C_{\rho_{\epsilon_n}}}$ sequence has weak limit equal to zero, we must have
$$
C_{\rho_{\epsilon_n}} \geq C(\frac{\rho(\epsilon_n x)}{\epsilon_n}, Q_\infty, K_\infty), \quad \forall n \in \mathbb{N},
$$
leading to 
$$
\liminf_{n \to +\infty} C_{\rho_{\epsilon_n}} \geq C(\infty),
$$
which contradicts Lemma \ref{LIMITC}.  This proves that the weak limit is non trivial for $\epsilon >0$ small enough and standard arguments show that its energy is equal to $C_{\rho_{\epsilon}}$, showing the desired result. 
\end{proof}

\section{Concentration of the solutions $u_\epsilon$}

\begin{Lem}\label{Clm3}
If $v_\epsilon$ is family solutions  of  $(P')$ with critical value $C_{\rho_\epsilon}$, then there exists a family $\{y_{\epsilon}\}$ and positive constants $R$ and $\beta$ such that
\begin{equation}\label{Ceq10}
\liminf_{\epsilon \to 0^{+}} \int_{B(y_{\epsilon}, R)}|v_{\epsilon}|^{2}\,dx \geq \beta >0.
\end{equation}
\end{Lem}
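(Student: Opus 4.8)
The plan is to argue by contradiction, the decisive tool being the concentration--compactness Lemma \ref{FSlem1} combined with the Nehari identity satisfied by the solutions $v_\epsilon$.

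First I would record that $\{v_\epsilon\}$ is bounded in $H^{\alpha}(\mathbb{R}^n)$, uniformly for small $\epsilon$. Since each $v_\epsilon \in \mathcal{N}_{\rho_\epsilon}$, we have $\|v_\epsilon\|_{\rho_\epsilon}^2 = \int_{\mathbb{R}^n} K(\epsilon x)|v_\epsilon|^{p+1}\,dx$, and therefore
$$
C_{\rho_\epsilon} = I_{\rho_\epsilon}(v_\epsilon) = \left(\frac{1}{2} - \frac{1}{p+1}\right)\|v_\epsilon\|_{\rho_\epsilon}^2.
$$
By Lemma \ref{LIMITC} one has $\limsup_{\epsilon\to 0}C_{\rho_\epsilon} \leq \inf_{\xi}C(\xi) < \infty$, so $\|v_\epsilon\|_{\rho_\epsilon}$ stays uniformly bounded as $\epsilon\to 0$; invoking Proposition \ref{FSprop1} then yields a uniform bound for $\|v_\epsilon\|$ in the standard $H^{\alpha}$-norm.

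Next, suppose the conclusion fails. Then no family $\{y_\epsilon\}$ can retain a fixed positive fraction of the $L^2$-mass inside a fixed ball along a sequence $\epsilon\to 0$; in particular, fixing $R=1$ and choosing $y_\epsilon$ to nearly realize $\sup_{y}\int_{B(y,1)}|v_\epsilon|^2\,dx$, the failure forces $\liminf_{\epsilon\to0}\sup_{y}\int_{B(y,1)}|v_\epsilon|^2\,dx = 0$, so there is a sequence $\epsilon_m\to 0$ with
$$
\lim_{m\to\infty}\sup_{y\in\mathbb{R}^n}\int_{B(y,1)}|v_{\epsilon_m}|^2\,dx = 0.
$$
Since $\{v_{\epsilon_m}\}$ is bounded, Lemma \ref{FSlem1} gives $v_{\epsilon_m}\to 0$ in $L^q(\mathbb{R}^n)$ for every $q\in(2,2_\alpha^*)$. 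Because $1<p<\frac{n+2\alpha}{n-2\alpha}$, the exponent $q=p+1$ lies in this range, and as $K$ is bounded by $(H_2)$ it follows that $\int_{\mathbb{R}^n}K(\epsilon_m x)|v_{\epsilon_m}|^{p+1}\,dx \to 0$.

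Finally, the Nehari identity then forces $\|v_{\epsilon_m}\|_{\rho_{\epsilon_m}}^2 \to 0$, which contradicts the uniform lower bound $\|v_{\epsilon_m}\|_{\rho_{\epsilon_m}}^2 \geq \beta > 0$ coming from \eqref{BETA1} (equivalently $C_{\rho_{\epsilon_m}}\geq\beta$ from \eqref{BETA2}). This contradiction produces the desired $\{y_\epsilon\}$, $R$ and $\beta$. The only point requiring genuine care is the uniformity of both bounds in $\epsilon$: the upper bound is supplied by Lemma \ref{LIMITC}, while the $\epsilon$-independence of the lower bound $\beta$ is precisely what \eqref{BETA1}--\eqref{BETA2} guarantee; once these are in hand, the remaining steps are routine applications of the Nehari identity and the vanishing lemma.
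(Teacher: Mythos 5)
Your proof is correct and follows essentially the same route as the paper: argue by contradiction, apply the vanishing Lemma \ref{FSlem1} to kill the $L^{p+1}$ term, and play the Nehari identity against a uniform-in-$\epsilon$ positive lower bound. The only cosmetic difference is that the paper re-derives that lower bound by comparing $I_{\rho_\epsilon}$ with a fixed functional $I_*$ (with parameters $\rho_0$, $a_1$, $a_2$), whereas you cite \eqref{BETA1}--\eqref{BETA2}; your explicit verification of the uniform $H^{\alpha}$-boundedness needed to invoke Lemma \ref{FSlem1} is a point the paper leaves implicit.
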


\begin{proof}
First we note that, by ($H_1$) and ($H_3$) we have
$$
\begin{aligned}
I_{\rho_\epsilon}(v) \geq I_{*}(v) &= \frac{1}{2}\left(\int_{\mathbb{R}^n}\int_{B(0, \rho_0)} \frac{|v(x+z) - v(x)|^2}{|z|^{n+2\alpha}}dz dx + \int_{\mathbb{R}^n}a_1|v|^2dx\right)\\
& - \frac{1}{p+1}\int_{\mathbb{R}^n}a_2 |u|^{p+1}dx.
\end{aligned}
$$ 
Let $\mathcal{N}_{*} = \{v\in H^{\alpha}(\mathbb{R}^n)\setminus \{0\}:\;\; I'_{*}(v)v =0\}$. Then, for each $v\in \mathcal{N}_{*}$ there exists unique $t_v>0$ such that $t_vv \in \mathcal{N}_*$. Hence,
\begin{equation}\label{c0}
\begin{aligned}
0< C(\rho_0, a_1,a_2) &= \inf_{v\in \mathcal{N}_*}I_*(v)  \leq \inf_{v\in \mathcal{N}_*}I_{\rho_\epsilon}(v)\\
&\leq \inf_{v\in \mathcal{N}_*}I_{\rho_{\epsilon}} (t_vv) = \inf_{u\in \mathcal{N}_{\rho_\epsilon}}I_{\rho_\epsilon}(u) = C_{\rho_\epsilon}.
\end{aligned}
\end{equation}
Now, by contradiction, if (\ref{Ceq10}) does not hold, then there exists a sequence $v_{k} = v_{\epsilon_{k}}$ such that
$$
\lim_{k\to \infty} \sup_{y \in \mathbb{R}^{n}} \int_{B(y ,R)} |v_{k}|^{2}dx = 0.
$$
By Lemma \ref{FSlem1}, 
$
v_{k} \to 0$ in $L^{q}(\mathbb{R}^{n})$ for any $2 < q < 2_{\alpha}^{*}.
$
However, this is impossible since by (\ref{c0}) 
$$
\begin{aligned}
0<C(\rho_0, a_1,a_2) \leq C_{\rho_\epsilon} &= I_{\rho_\epsilon}(v_\epsilon) - \frac{1}{2}I'_{\rho_\epsilon}(v_\epsilon)v_\epsilon\\
& = \frac{p-1}{2(p+1)}\int_{\mathbb{R}^{n}}K(\epsilon x)|v_{\epsilon}|^{p+1}dx \\
&\leq \frac{p-1}{2(p+1)}\int_{\mathbb{R}^n} a_2|v_\epsilon|^{p+1}dx \to 0,\;\;\mbox{as} \;\;k \to \infty.	
 \end{aligned}
$$
\end{proof}

Now let 
\begin{equation}\label{sol}
w_{\epsilon}(x) = v_{\epsilon}(x + y_{\epsilon}) = u_{\epsilon}(\epsilon x + \epsilon y_{\epsilon}),
\end{equation} 
then by (\ref{Ceq12}),
\begin{equation}\label{Ceq11}
\liminf_{\epsilon \to 0^{+}}\int_{B(0,R)} |w_{\epsilon}|^{2}dx \geq \beta > 0.
\end{equation}
To continue, we consider the rescaled scope function $\overline\rho_\epsilon$, defined as, 
$$
\bar\rho_\epsilon(x)=\frac{1}{\epsilon}\rho(\epsilon x+\epsilon y_\epsilon)
$$
and then $w_\epsilon$ satisfies the equation 
\begin{equation}\label{Ceq12}
(-\Delta)_{\overline{\rho}_{\epsilon}}^{\alpha}w_{\epsilon}(x) + Q(\epsilon x + \epsilon y_\epsilon)w_{\epsilon}(x) = K(\epsilon x + \epsilon y_\epsilon)|w_{\epsilon}(x)|^{p-1}w_\epsilon(x), \;\;\mbox{in}\;\;\mathbb{R}^{n}.
\end{equation}

\begin{Lem}\label{Clm4}
The sequence $\{\epsilon y_\epsilon\}$ is bounded. Moreover, if $\epsilon_m y_{\epsilon_m} \to \xi^*$, then
$$
C(\xi^*) = \inf_{\xi \in \mathbb{R}^n} C(\xi).
$$ 
\end{Lem}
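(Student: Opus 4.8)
The plan is to pass to the limit in the translated equation \eqref{Ceq12} and compare the resulting energy with the levels $C(\xi)$, using Lemma \ref{LIMITC} as the decisive upper bound. First I would record uniform bounds. Since $v_\epsilon \in \mathcal{N}_{\rho_\epsilon}$, the Nehari identity gives $C_{\rho_\epsilon} = I_{\rho_\epsilon}(v_\epsilon) - \frac12 I'_{\rho_\epsilon}(v_\epsilon)v_\epsilon = \frac{p-1}{2(p+1)}\|v_\epsilon\|_{\rho_\epsilon}^2$; by Lemma \ref{LIMITC} the left side is bounded, so $\{\|v_\epsilon\|_{\rho_\epsilon}\}$ is bounded, and Proposition \ref{FSprop1} together with the translation invariance of the full $H^\alpha(\mathbb{R}^n)$-norm shows that $\{w_\epsilon\}$ is bounded in $H^\alpha(\mathbb{R}^n)$. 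Hence, up to a subsequence, $w_\epsilon \rightharpoonup w$ in $H^\alpha(\mathbb{R}^n)$ and $w_\epsilon \to w$ in $L^q_{loc}$ for $q\in[2,2_\alpha^*)$ by Theorem \ref{FStm1}. The concentration estimate \eqref{Ceq11} then forces $\int_{B(0,R)}|w|^2 \geq \beta > 0$, so $w \neq 0$.

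Next I would identify the limit equation. Assume first $\{\epsilon y_\epsilon\}$ is bounded and $\epsilon_m y_{\epsilon_m} \to \xi^*$; then for each fixed $x$ one has $\epsilon x + \epsilon y_\epsilon \to \xi^*$, so $Q(\epsilon x + \epsilon y_\epsilon)\to Q(\xi^*)$, $K(\epsilon x + \epsilon y_\epsilon)\to K(\xi^*)$, while $\bar\rho_\epsilon(x)=\frac1\epsilon\rho(\epsilon x+\epsilon y_\epsilon)\geq \rho_0/\epsilon \to \infty$. Testing \eqref{Ceq12} against an arbitrary $\varphi\in C_c^\infty(\mathbb{R}^n)$, the potential and nonlinear terms pass to the limit by weak convergence and the local compactness above, giving $\int Q(\xi^*)w\varphi$ and $\int K(\xi^*)|w|^{p-1}w\varphi$. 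For the bilinear term I would write it, using $\langle\cdot,\cdot\rangle$ for the full Gagliardo form, as $\langle w_\epsilon,\varphi\rangle$ minus the tail $\int\!\!\int_{\{|z|\geq\bar\rho_\epsilon(x)\}}\frac{[w_\epsilon(x+z)-w_\epsilon(x)][\varphi(x+z)-\varphi(x)]}{|z|^{n+2\alpha}}\,dz\,dx$. Weak convergence in $H^\alpha$ gives $\langle w_\epsilon,\varphi\rangle\to\langle w,\varphi\rangle$, and the tail is controlled by Cauchy--Schwarz through $\|w_\epsilon\|\,\big(\int\!\!\int_{|z|\geq\rho_0/\epsilon}\frac{|\varphi(x+z)-\varphi(x)|^2}{|z|^{n+2\alpha}}\,dz\,dx\big)^{1/2}\to 0$, since $\varphi\in H^\alpha$ and $\rho_0/\epsilon\to\infty$. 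Thus $w$ is a nontrivial weak solution of \eqref{05} with $\xi=\xi^*$; in particular $w\in\mathcal{N}_{\xi^*}$, so $J_{\xi^*}(w)\geq C(\xi^*)$.

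I would then close with the energy comparison and Fatou. Writing $C_{\rho_\epsilon}=I_{\rho_\epsilon}(v_\epsilon)-\frac12 I'_{\rho_\epsilon}(v_\epsilon)v_\epsilon=\frac{p-1}{2(p+1)}\int K(\epsilon x+\epsilon y_\epsilon)|w_\epsilon|^{p+1}\,dx$ (translation-invariant), Fatou's lemma applied to the a.e.-convergent nonnegative integrands gives $\liminf_{m}C_{\rho_{\epsilon_m}}\geq \frac{p-1}{2(p+1)}\int K(\xi^*)|w|^{p+1}\,dx=J_{\xi^*}(w)\geq C(\xi^*)\geq \inf_\xi C(\xi)$. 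Combined with $\limsup_\epsilon C_{\rho_\epsilon}\leq \inf_\xi C(\xi)$ from Lemma \ref{LIMITC}, the whole chain collapses to equalities, yielding $C(\xi^*)=\inf_\xi C(\xi)$. Finally, for boundedness of $\{\epsilon y_\epsilon\}$ I would argue by contradiction: if $|\epsilon_m y_{\epsilon_m}|\to\infty$, the same passage to the limit --- now with coefficients tending to $Q_\infty,K_\infty$ by $(H_0)$ --- produces a nontrivial solution of the problem at infinity, whence the identical Fatou estimate gives $\liminf_m C_{\rho_{\epsilon_m}}\geq C(\infty)$, contradicting $\limsup_\epsilon C_{\rho_\epsilon}<C(\infty)$ from Lemma \ref{LIMITC}. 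The main difficulty is precisely the justification that the regional bilinear form over the blowing-up ball $B(0,\bar\rho_\epsilon(x))$ converges to the full fractional form, i.e.\ controlling the tail uniformly in $\epsilon$ so as to recover $(-\Delta)^\alpha$ in the limit; everything else is the standard lower-semicontinuity-plus-Fatou comparison.
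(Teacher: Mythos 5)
Your proposal is correct and follows essentially the same route as the paper's proof: nontrivial weak limit of the translated solutions $w_\epsilon$ (via Lemma \ref{Clm3}), passage to the limiting problem (at $\xi^*$ for the minimality claim, at infinity for the contradiction giving boundedness of $\{\epsilon y_\epsilon\}$), Fatou's lemma applied to $\int K(\epsilon_m x+\epsilon_m y_{\epsilon_m})|w_m|^{p+1}dx$, and the final sandwich with Lemma \ref{LIMITC}. The only difference is technical: you upgrade the limit statement to ``$w$ is a genuine weak solution'' by estimating the tail of the regional form over $\{|z|\ge \rho_0/\epsilon\}$ and then use $w\in\mathcal{N}_{\xi^*}$ directly, whereas the paper contents itself with the inequality obtained by testing with $w$ (as in \eqref{Ceq13}) and compensates with the projection $\theta\in(0,1]$, exploiting $\theta^{p+1}\le 1$; both devices yield the same lower bound $\liminf_m C_{\rho_{\epsilon_m}}\ge C(\xi^*)$ (respectively $\ge C(\infty)$).
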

\begin{proof}
Suppose by contradiction that $|\epsilon_m y_{\epsilon_m}| \to \infty$ and consider the function $w_{\epsilon_m}$ defined by (\ref{sol}), which satisfies (\ref{Ceq12}). Since $\{C_{\rho_{\epsilon_m}}\}$ is bounded, so the sequence $\{w_m\}$ is also bounded in $H^{\alpha}(\mathbb{R}^n)$. Then $w_m \rightharpoonup w$ in $H^\alpha (\mathbb{R}^n)$, and $w\neq 0$ by Lemma \ref{Clm3} . Now, by (\ref{Ceq12}) we get the following equality
$$
\begin{aligned}
&\int_{\mathbb{R}^n}\int_{B(0, \frac{1}{\epsilon_m}\rho (\epsilon_m x + \epsilon_m y_{\epsilon_m}))}\frac{[w_{m}(x+z) - w_{m}(x)][w(x+z) - w(x)]}{|z|^{n+2\alpha}}dz dx\\
& + \int_{\mathbb{R}^n}Q(\epsilon_m x + \epsilon_m y_{\epsilon_m})w_{m}wdx = \int_{\mathbb{R}^n}K(\epsilon_m x + \epsilon_m y_{\epsilon_m})|w_{m}|^{p-1}w_{m}w dx.
\end{aligned}
$$
So, by Fatou's Lemma we get
\begin{equation}\label{Ceq13}
\int_{\mathbb{R}^n}\int_{\mathbb{R}^n}\frac{|w(x+z) - w(x)|^2}{|z|^{n+2\alpha}}dz dx + \int_{\mathbb{R}^n}Q_\infty |w|^2dx \leq  \int_{\mathbb{R}^n}K_\infty|w|^{p+1}dx
\end{equation}
Let $\theta >0$ such that 
$$
J_\infty(\theta w) = \max_{t\geq 0} J_\infty (t w).
$$
From (\ref{Ceq13}), $\theta \in (0,1]$, whence 
$$ 
\begin{aligned}
C(\infty) &\leq J_\infty (\theta w) - \frac{1}{2}J'_{\infty}(\theta w)\theta w = \left( \frac{1}{2} - \frac{1}{p+1}\right) \theta^{p+1}\int_{\mathbb{R}^n} K_\infty|w(x)|^{p+1}dx\\
&\leq \left( \frac{1}{2} - \frac{1}{p+1} \right)\int_{\mathbb{R}^n}K_\infty|w(x)|^{p+1}dx\\
&\leq \left( \frac{1}{2} - \frac{1}{p+1} \right) \liminf_{m \to \infty} \int_{\mathbb{R}^n} K(\epsilon_m x + \epsilon_m y_{\epsilon_m})|w_m(x)|^{p+1}dx\\
&= \liminf_{m\to \infty} C_{{\rho}_{\epsilon_n}} < C(\infty)
\end{aligned}
$$
which is a contradiction. So $\{\epsilon y_\epsilon\}$ is bounded. Thus, there exists a subsequence of $\{\epsilon y_\epsilon\}$ such that $\epsilon_m y_{\epsilon_m} \to \xi^*$.

Repeating above arguments, define the function
$$
w_m(x)=v_{\epsilon_m}(x + y_{\epsilon_m}) = u_{\epsilon_m}(\epsilon_m x + \epsilon_m y_{\epsilon_m}).
$$ 
This function satisfies the equation (\ref{Ceq12}), and again $\{w_m\}$ is bounded in $H^{\alpha}(\mathbb{R}^n)$. Then $w_m \rightharpoonup w$ in $H^\alpha (\mathbb{R}^n)$, where $w$ satisfy the following equation
\begin{equation}\label{Ceq14}
(-\Delta)^{\alpha}w + Q(\xi^*)w = K(\xi^*)|w|^{p-1}w, \quad x \in \mathbb{R}^n,
\end{equation}
in the weak sense. Furthermore, associated to (\ref{Ceq14}) we have the energy functional 
$$
\begin{aligned}
J_{\xi^*}(u) &= \frac{1}{2}\left( \int_{\mathbb{R}^n}\int_{\mathbb{R}^n} \frac{|u(x+z) - u(x)|^2}{|z|^{n+2\alpha}}dz dx + \int_{\mathbb{R}^n} Q(\xi^*)|u(x)|^2dx\right)\\
&- \frac{1}{p+1}\int_{\mathbb{R}^n} K(\xi^*)|u(x)|^{p+1}dx.
\end{aligned}
$$
Using $w$ as a test function in (\ref{Ceq12}) and taking the limit of $m \to +\infty$, we get
$$
\int_{\mathbb{R}^n}\int_{\mathbb{R}^n}\frac{|w(x+z) - w(x)|^2}{|z|^{n+2\alpha}}dxdx + \int_{\mathbb{R}^n}Q(\xi^*)|w(x)|^2dx \leq \int_{\mathbb{R}^n}K(\xi^*)|w|^{p+1}dx,
$$
which implies that there exists $\theta \in (0, 1]$ such that
$$
J_{\xi^*}(\theta w) = \max_{t\geq 0}J_{\xi^*}(tw). 
$$
So, by Lemma \ref{LIMITC}, 
$$
\begin{aligned}
C(\xi^*)&\leq J_{\xi^*}(\theta w) = \left( \frac{1}{2} - \frac{1}{p+1} \right)\theta^{p+1} \int_{\mathbb{R}^n}K(\xi^*)|w(x)|^{p+1}dx\\
&\leq \left( \frac{1}{2}-\frac{1}{p+1} \right) \liminf_{m \to \infty} \int_{\mathbb{R}^n}K(\epsilon_m x + \epsilon_m y_{\epsilon_m})|w_{m}(x)|^{p+1}dx\\
&= \liminf_{m\to \infty} [I_{{\rho}_{\epsilon_m}}(v_{\epsilon_m}) - I'_{{\rho}_{\epsilon_m}}(v_{\epsilon_m})v_{\epsilon_m}]\\
&=\liminf_{m\to \infty} C_{{\rho}_{\epsilon_m}} \leq \limsup_{m\to \infty} C_{{\rho}_{\epsilon_m}} \leq \inf_{\xi\in \mathbb{R}^n}C(\xi),
\end{aligned}
$$
showing that $C(\xi^*)=\displaystyle \inf_{\xi\in \mathbb{R}^n}C(\xi)$. 
\end{proof}

Now we   prove  the convergence of $w_\epsilon$ as $\epsilon\to 0$.
\begin{Lem}\label{Clm5} 
For every sequence $\{\epsilon_m\}$ there is a subsequence, we keep calling the same, so
that 
$w_{\epsilon_m}=w_{m} \to w$ in $H^{\alpha}(\mathbb{R}^{n})$, when $m\to \infty$, where $w$ is a solution of \equ{Ceq14}.
\end{Lem}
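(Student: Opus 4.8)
The plan is to promote the weak convergence $w_m\rightharpoonup w$ obtained in Lemma \ref{Clm4} to strong convergence by computing, \emph{in the $m$-dependent inner product attached to} (\ref{Ceq12}), the squared distance between $w_m$ and $w$, showing it tends to zero, and only at the very end transferring the information to the fixed norm of $H^\alpha(\R^n)$ through a tail estimate. I work along the subsequence of Lemma \ref{Clm4} for which $\epsilon_m y_{\epsilon_m}\to\xi^*$ and $w_m\rightharpoonup w$, and I abbreviate $Q_m(x)=Q(\epsilon_m x+\epsilon_m y_{\epsilon_m})$, $K_m(x)=K(\epsilon_m x+\epsilon_m y_{\epsilon_m})$,
\[
B_m(u,v)=\int_{\R^n}\int_{B(0,\overline{\rho}_{\epsilon_m}(x))}\frac{[u(x+z)-u(x)][v(x+z)-v(x)]}{|z|^{n+2\alpha}}\,dz\,dx,
\]
with $B_\infty$ the analogue taken over all of $\R^n$, and $\langle u,v\rangle_m=B_m(u,v)+\int_{\R^n}Q_m uv\,dx$.

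First I would record the energy data. By Lemma \ref{Clm4} and Lemma \ref{LIMITC} one has $C_{\rho_{\epsilon_m}}\to C(\xi^*)=\inf_{\xi}C(\xi)$, and the chain of inequalities in the proof of Lemma \ref{Clm4} collapses to equalities, so that $w$ is in fact a ground state of $J_{\xi^*}$: $J_{\xi^*}'(w)=0$ and $J_{\xi^*}(w)=C(\xi^*)$. Writing $A_m=\int_{\R^n}K_m|w_m|^{p+1}dx$ and $A_\infty=\int_{\R^n}K(\xi^*)|w|^{p+1}dx$, the identity $C_{\rho_{\epsilon_m}}=I_{\rho_{\epsilon_m}}(v_{\epsilon_m})-\frac{1}{2}I'_{\rho_{\epsilon_m}}(v_{\epsilon_m})v_{\epsilon_m}=\left(\frac{1}{2}-\frac{1}{p+1}\right)A_m$ (after the translation $x\mapsto x+y_{\epsilon_m}$) together with $C(\xi^*)=\left(\frac{1}{2}-\frac{1}{p+1}\right)A_\infty$ gives $A_m\to A_\infty$. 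Then I would expand $\langle w_m-w,w_m-w\rangle_m=\langle w_m,w_m\rangle_m-2\langle w_m,w\rangle_m+\langle w,w\rangle_m$ and treat the three terms. Testing (\ref{Ceq12}) with $w_m$ gives $\langle w_m,w_m\rangle_m=A_m\to A_\infty$. Testing (\ref{Ceq12}) with $w$ gives $\langle w_m,w\rangle_m=\int_{\R^n}K_m|w_m|^{p-1}w_m\,w\,dx$; since $\{|w_m|^{p-1}w_m\}$ is bounded in $L^{(p+1)/p}(\R^n)$ and converges a.e.\ to $|w|^{p-1}w$, while $K_m w\to K(\xi^*)w$ in $L^{p+1}(\R^n)$ by dominated convergence, this term tends to $A_\infty$. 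For the last term, because $\overline{\rho}_{\epsilon_m}(x)\ge\rho_0/\epsilon_m\to\infty$ uniformly in $x$, dominated convergence yields $B_m(w,w)\to B_\infty(w,w)$ and $\int Q_m w^2\to\int Q(\xi^*)w^2$, so $\langle w,w\rangle_m\to B_\infty(w,w)+\int Q(\xi^*)w^2=A_\infty$, the final equality because $w$ solves (\ref{Ceq14}). Collecting, $\langle w_m-w,w_m-w\rangle_m=A_m-2A_\infty+A_\infty+o(1)\to0$.

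Finally I would convert this into convergence in $H^\alpha$. From $\langle w_m-w,w_m-w\rangle_m\ge a_1\|w_m-w\|_{L^2}^2$ I obtain $w_m\to w$ in $L^2(\R^n)$, and simultaneously $B_m(w_m-w,w_m-w)\to0$. To pass from $B_m$ to the full Gagliardo seminorm I use the elementary tail bound, valid for every $u\in H^\alpha(\R^n)$,
\[
0\le B_\infty(u,u)-B_m(u,u)\le 2\int_{\R^n}\int_{|z|\ge\rho_0/\epsilon_m}\frac{|u(x+z)|^2+|u(x)|^2}{|z|^{n+2\alpha}}\,dz\,dx\le C\,\epsilon_m^{2\alpha}\|u\|_{L^2}^2,
\]
applied to $u=w_m-w$; since $\|w_m-w\|_{L^2}$ stays bounded, the tail vanishes, so the full seminorm of $w_m-w$ also tends to $0$. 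Together with the $L^2$ convergence this gives $w_m\to w$ in $H^\alpha(\R^n)$, with $w$ the solution of (\ref{Ceq14}) as required.

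I expect the genuine difficulty to be conceptual rather than computational: the inner product $\langle\,\cdot\,,\cdot\,\rangle_m$ varies with $m$, both through the scope $\overline{\rho}_{\epsilon_m}$ and through the potential $Q_m$, so one cannot simply invoke ``weak convergence plus convergence of norms implies strong convergence'' in a \emph{fixed} Hilbert space. Evaluating the squared distance directly in the moving norm, controlling each bilinear piece via the energy identity and the two equations, and only afterward transferring to the fixed $H^\alpha$-norm by the tail bound is precisely what circumvents this. The most delicate individual points are the dominated-convergence passage $B_m(w,w)\to B_\infty(w,w)$, which works because the scope radius blows up uniformly as $\epsilon_m\to0$, and the weak-times-strong convergence handling of the nonlinear cross term $\int_{\R^n}K_m|w_m|^{p-1}w_m\,w\,dx$.
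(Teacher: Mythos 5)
Your proof is correct, and while it shares with the paper the indispensable opening step, it implements the key compactness upgrade by a genuinely different mechanism. Both arguments begin with the collapsed chain of energy inequalities (the paper reruns the chain of Lemma \ref{Clm4} with $w$ itself, which is legitimate since $J_{\xi^*}'(w)w=0$; this is the cleanest way to justify your claim that $w$ is a ground state of $J_{\xi^*}$), which forces $C_{\rho_{\epsilon_m}}\to C(\xi^*)=\inf_{\xi}C(\xi)$ and the convergence of the nonlinear terms $\int_{\R^n} K(\epsilon_m x+\epsilon_m y_{\epsilon_m})|w_m|^{p+1}dx \to \int_{\R^n} K(\xi^*)|w|^{p+1}dx$. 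From there the paper proceeds by Fatou splitting: testing (\ref{Ceq12}) with $w_m$ writes this quantity as the sum of the scope-restricted Gagliardo term and the $Q$-weighted $L^2$ term, each asymptotically bounded below by the corresponding term for $w$, so each converges separately (its items (a) and (b)); then (b) together with $Q\ge a_1$ gives uniform smallness of the tails of $\int|w_m|^2$ (estimate (\ref{Ceq15})), which combined with the local compact embedding yields $w_m\to w$ in $L^2(\R^n)$, the tightness being reused immediately for the concentration estimates of Theorem \ref{T1}. You instead compute the squared distance of $w_m$ to $w$ directly in the $\epsilon_m$-dependent inner product attached to (\ref{Ceq12}), evaluating the three terms by testing the two equations and by a weak-times-strong pairing for the cross term, and only then pass to the fixed norm via the explicit $O(\epsilon_m^{2\alpha})$ tail bound. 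Your route has two advantages: it confronts head-on the fact that the norms move with $m$, a point the paper glosses over (its item (a) is stated for the full Gagliardo seminorm of $w_m$, although testing the equation only controls the seminorm restricted to $B(0,\overline{\rho}_{\epsilon_m}(x))$; closing that gap requires exactly your tail estimate), and it delivers the $H^{\alpha}(\R^n)$ convergence asserted in the lemma in one stroke, rather than leaving it implicit after the $L^2$ step. What the paper's route buys is the tail bound (\ref{Ceq15}) on $|w_m|^2$ needed right afterwards for the concentration statement; your argument loses nothing there, since the $L^2(\R^n)$ convergence you obtain implies the same tightness.
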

\begin{proof}
Since $w$ is a solution of (\ref{Ceq14}), from Lemma \ref{LIMITC} , we have
$$
\begin{aligned}
&\inf_{\xi\in \mathbb{R}^n} C(\xi) = C(\xi^*) \leq J_{\xi^*}(w) = J_{\xi^*}(w) - \frac{1}{2}J'_{\xi^*}(w)w\\
& = \left(\frac{1}{2} - \frac{1}{p+1}\right) \int_{\mathbb{R}^n}K(\xi^*)|w|^{p+1}dx\\
&\leq \left(\frac{1}{2} - \frac{1}{p+1} \right)\liminf_{m\to \infty} \int_{\mathbb{R}^n} K(\epsilon_m x + \epsilon_m y_{\epsilon_m})|w_m|^{p+1}(x)dx\\
& \leq \left(\frac{1}{2} - \frac{1}{p+1} \right)\limsup_{m\to \infty}\int_{\mathbb{R}^n} K(\epsilon_m x + \epsilon_m y_{\epsilon_m})|w_m|^{p+1}dx\\
& = \left(\frac{1}{2} - \frac{1}{p+1} \right)\limsup_{m\to \infty}\int_{\mathbb{R}^n} K(\epsilon_m x )|v_m|^{p+1}dx\\
&\leq \limsup_{m\to \infty} \left( I_{{\rho}_{\epsilon_m}}(v_m) - \frac{1}{p+1}I'_{\rho_{\epsilon_m}}(v_m)v_m \right) \\
&= \limsup_{m\to \infty} C_{\overline{\rho}_{\epsilon_m}} \leq \inf_{\xi \in \mathbb{R}^n} C(\xi).
\end{aligned}
$$
The above estimates gives 
$$
\lim_{m\to \infty} \int_{\mathbb{R}^n}K(\epsilon_m x + \epsilon_m y_{\epsilon_m})|w_m|^{p+1}dx= \int_{\mathbb{R}^n}K(\xi^*)|w|^{p+1}dx.
$$
Consequently,  
$$
\begin{aligned}
(a)&\lim_{m\to \infty} \int_{\mathbb{R}^n}\int_{\mathbb{R}^n} \frac{|w_m(x+z) - w_m(x)|^2}{|z|^{n+2\alpha}}dz dx = \int_{\mathbb{R}^n} \int_{\mathbb{R}^n} \frac{|w(x+z)-w(x)|^2}{|z|^{n+2\alpha}}dz dx\\
(b)&\lim_{m\to \infty} \int_{\mathbb{R}^n} Q(\epsilon_m x + \epsilon_m y_{\epsilon_m})|w_m(x)|^{2}dx = \int_{\mathbb{R}^n} Q(\xi^*)|w(x)|^2dx.
\end{aligned}
$$
From $(b)$, given $\delta>0$ there exists $R>0$ such that
$$
\int_{|x|\geq R}Q(\epsilon_m x + \epsilon_m y_{\epsilon_m})|w_m(x)|^{2}dx \leq \delta.
$$
Furthermore, using $(H_3)$, we obtain
\begin{equation}\label{Ceq15}
\int_{|x|\geq R}|w_m(x)|^2dx \leq \frac{\delta}{a_1}.
\end{equation} 
On the other hand
\begin{equation}\label{Ceq16}
\lim_{m\to \infty}\int_{|x|\leq R}|w_m(x)|^2dx = \int_{|x|\leq R}|w(x)|^2dx.
\end{equation}
From (\ref{Ceq15}) and (\ref{Ceq16}), $w_m \to w $ in $L^2(\mathbb{R}^n)$. From this,  given $\delta>0$ there are $\epsilon_0, R>0$ such that
$$
\int_{B^{c}(x^*,\epsilon_m R)}|u_{\epsilon_m}|^{2}\,dx \leq \epsilon_m^{n}\delta \quad \mbox{and} \quad \int_{B(x^*,\epsilon_m R)}|u_{\epsilon_m}|^{2}\,dx \geq \epsilon_m^{n} C, \quad \forall \epsilon_m \leq \epsilon_0,
$$ 
where  $C$ is a constant independent of $\delta$ and $m$, showing the concentration of solutions $\{u_{\epsilon_{n}}\}$.
\end{proof}

\end{document}